\documentclass[10pt]{amsart}
\usepackage{amsmath,amssymb,latexsym,cancel,rotating}
\usepackage{graphicx,mathrsfs,color,fancyhdr,amsthm}
\usepackage{tikz}
\usepackage{tikz-cd}
\usepackage{geometry}
\usepackage{hyperref}
\usepackage{cite}
\usepackage{setspace}
\newtheorem{theorem}{Theorem}[section]
\newtheorem{lemma}[theorem]{Lemma}
\newtheorem{corollary}[theorem]{Corollary}
\newtheorem{definition}[theorem]{Definition}
\newtheorem{proposition}[theorem]{Proposition}

\newtheorem{remark}[theorem]{Remark}

 \def\cD{{\mathcal D}}

\def\bbN{{\mathbb N}}  \def\bbZ{{\mathbb Z}}  \def\bbQ{{\mathbb Q}}
    \def\bbF{{\mathbb F}}

  \def\leq{\leqslant}  \def\geq{\geqslant}

\def\Hom{\mbox{\rm Hom}}  
       
\def\Ext{\mbox{\rm Ext}}   
\def\dim{\mbox{\rm dim}}

\def\mod{\mbox{\rm mod}}

\def\bfV{{\mathbf V}}

\def\bfV{\mathbf{V}}
\def\bfW{\mathbf{W}}
\def\bfE{\mathbf{E}}
\def\bfG{\mathbf{G}}

\def\Ind{\mathbf{Ind}}
\def\Res{\mathbf{Res}}
\def\ind{\mathbf{ind}}
\def\res{\mathbf{res}}

\newcommand\bbq{{\mathbb Q}}

\newcommand\mk{{\mathcal{K}}}

\newcommand\mo{{\mathcal{O}}}
\newcommand\mP{{\mathcal{P}}}
\newcommand\mq{{\mathcal{Q}}}

\newcommand\ml{{\mathcal{L}}}

\begin{document}

\title[]{Construction of PBW and canonical bases in the folding extension algebras for symmetrizable types}

\author{Yumeng Wu}

\address{Beijing International Center for Mathematical Research, Beijing 100871, P. R. China}
\email{2506397175@pku.edu.cn (Y. Wu)}

\subjclass[2000]{16G20, 17B37}

\keywords{}

\bibliographystyle{abbrv}

\begin{abstract}
We study extension algebras arising from Lusztig's construction of perverse sheaves on quiver representation spaces equipped with an admissible automorphism $a$, leading to folding Khovanov--Lauda--Rouquier (KLR) algebras. Extending the method of M. Varagnolo and E. Vasserot to Lusztig's symmetrizable setting, we construct the skew group algebra $R(\nu)\langle\mathbf{a}\rangle$, where $\mathbf{a}$ is induced by the admissible automorphism $a$, and show that $\Ext^{\bullet}(\ ,L)$ preserves the signed basis and the crystal structure. In this framework, for finite types, we define standard and dual standard modules, analyze their behavior under dualities, and study the resulting Grothendieck groups. We prove that the transition matrix from the classes of indecomposable projective modules, which give the canonical basis, to the dual standard modules, which give the PBW basis, is upper triangular with diagonal entries equal to one. This provides a geometric categorification of these bases and of their relation in the symmetrizable setting.
\end{abstract}

\maketitle

\setcounter{tocdepth}{2}\tableofcontents
\begin{spacing}{1.5}
\section{Introduction}
Geometric constructions over representations of quivers play a central role in the theory of quantum groups. One of the foundational developments is Lusztig's construction of canonical bases via categories of perverse sheaves on moduli spaces of quiver representations~\cite{lusztig1990canonical, lusztig2010introduction}. Quiver Hecke algebras were introduced independently by Khovanov and Lauda~\cite{KLR} and by Rouquier~\cite{rouquier20082kacmoodyalgebras,doi:10.1142/S1005386712000247}. Khovanov and Lauda used categories of finitely generated graded projective modules over these algebras to categorify the integral form of the negative half of the corresponding quantum group, while Rouquier developed these algebras within the framework of $2$-Kac-Moody algebras and their $2$-representations. Throughout this paper, we refer to quiver Hecke algebras as Khovanov--Lauda--Rouquier (KLR) algebras. In the symmetric case, Varagnolo and Vasserot gave a geometric realization of these algebras by identifying KLR algebras with the graded equivariant extension algebras of the corresponding Lusztig complexes on quiver representation varieties~\cite{VaragnoloVasserot+2011+67+100}. This realization provides a direct link between the representation theory of KLR algebras and Lusztig's geometric construction of the canonical basis. These constructions provide deep insight into the structure of quantum enveloping algebras and establish a powerful bridge between geometry and algebra.

In this paper, we study the folding extension algebra associated with Lusztig sheaves complexes over the moduli space of a quiver equipped with an admissible automorphism. Motivated by McNamara's folding of Khovanov--Lauda--Rouquier (KLR) algebras~\cite{https://doi.org/10.1112/jlms.12218}, which is further studied in \cite{MA202460} and Kato's approach to extension algebras~\cite{cd088214-009e-3813-9a86-97f9a299a24c}, we construct and analyze a category of modules over the skew group algebra $R(\nu)\langle\mathbf{a}\rangle$, which arises from $R(\nu)$ the Ext-algebra  of Lusztig sheaves together with the periodic functor induced by the automorphism. We show that this category is equivalent to the category of $R(\nu)$-modules endowed with a compatible action of the automorphism. We also prove that bialgebraic structure on the Grothendieck group of projective modules is induced by Lusztig's induction functor and restriction functor, compatibly with the periodic structure.

A central part of the paper is the comparison of two natural bases in the Grothendieck groups associated with projective modules: the PBW basis, given by dual standard modules, and the canonical basis, given by indecomposable projective modules. We construct both standard and dual standard modules in the folded setting and prove that the transition matrix relating these bases is upper triangular with diagonal entries equal to one. This mirrors the classical relation between canonical and PBW bases and shows that the folding extension algebra retains the expected structural features.

Our results also provide a categorical realization of the bar involution and of the relevant duality structures, extending known results from the symmetric case to the symmetrizable setting. In addition, we prove a pairing identity expressing the duality between standard and dual standard modules, thereby establishing a precise correspondence between the PBW and canonical bases in the Grothendieck group.

The paper is organized as follows. In Section 2, we recall Lusztig's theory for quiver representations with an admissible automorphism and introduce the periodic triangulated category $\widetilde{\mathcal{Q}}_V$. In Section 3, we define the folding extension algebra $R(\nu)\langle\mathbf{a}\rangle$ in Subsection 3.1, study its module categories and derive the explicit descriptions of the projective modules in Subsection 3.2, and construct the PBW and canonical bases within the Grothendieck group in Subsection 3.3. We then compare these bases and determine the transition matrix between them.

\section{Lusztig sheaves complexes over representation space of a quiver with an admissible automorphism}
In this section, we introduce the representation space of a quiver together with the action of $a$. Our main goal is to study the periodic category of semisimple perverse sheaves on this space, together with the corresponding $a$-equivariant morphism. Throughout the paper, the ground field is the closure of the finite field $k=\overline{\bbF_q}$.

\begin{definition}
A quiver $Q$ consists of the following data:
\begin{itemize}
    \item A set of vertices $I = \{1,2,\ldots,n\}$.
    \item A set of arrows $H$, together with two maps $s, t : H \to I$ such that for each arrow $h \in H$, $s(h)$ denotes its source and $t(h)$ denotes its target.
\end{itemize}
\end{definition}

\begin{definition}
   An automorphism $a$ of the quiver $Q = (I, H, s, t)$ is called \emph{admissible} if it satisfies the following conditions:
\begin{itemize}
    \item $a$ is an automorphism of both the set $I$ and the set $H$;
    \item For any $h \in H$, one has
    \[
    a(s(h)) = s(a(h)) \quad \text{and} \quad a(t(h)) = t(a(h));
    \]
    \item For each $a$-orbit in $I$, the induced subquiver has no arrows (that is, it is a discrete set).
\end{itemize}
\end{definition}

Let $Q = (I, H, s, t)$ be a finite quiver without oriented cycles, where $a$ is an admissible automorphism. Let $n$ be a fixed positive integer such that $a^n = 1$ on both $I$ and $H$.

Let $\mathbb{N}I$ denote the monoid of $\mathbb{N}$-linear combinations of vertices $i \in I$. Then the cyclic group $\langle a \rangle$ acts on $\mathbb{N}I$ via
\[
a \cdot \sum_{i \in I} \nu_i i = \sum_{i \in I} \nu_{i} a(i).
\]
Let $\mathbb{N}I^a \subset \mathbb{N}I$ denote the submonoid of fixed points under this action, i.e., consisting of $\sum_{i \in I} \nu_i i$ such that $\nu_i = \nu_{a(i)}$ for all $i \in I$.

For any $\nu \in \mathbb{N}I^a$, we fix an $I$-graded $k$-vector space $\mathbf{V}^\nu=\oplus_{i\in I}\bfV_i$ (or simply $\mathbf{V}$ when no confusion can arise) with dimension vector $\nu$, together with a map $a_{\bfV}:\bfV\rightarrow \bfV$ such that $a_{\bfV}:\bfV_i\rightarrow \bfV_{a(i)}$ is an isomorphism for every $i\in I$.

We define the algebraic group and affine space
\[
\mathbf{G}_{\nu} = \prod_{i \in I} \mathrm{GL}_k(\mathbf{V}_i), \quad
\mathbf{E}_{\nu} = \bigoplus_{h \in H} \mathrm{Hom}_k(\mathbf{V}_{s(h)}, \mathbf{V}_{t(h)}),
\]
where $\mathbf{G}_{\nu}$ acts on $\mathbf{E}_{\nu}$ via
\[
g \cdot x = \bigl(g_{t(h)} x_h g_{s(h)}^{-1}\bigr)_{h \in H}
\]
for all $g \in \mathbf{G}_{\nu}$ and $x \in \mathbf{E}_{\nu}$.

The automorphism $a$ induces an automorphism $a : \mathbf{G}_{\nu} \to \mathbf{G}_{\nu}$ satisfying
\[
\text{for all } g \in \mathbf{G}_{\nu}, \quad a(g) \circ a_{\mathbf{V}} = a_{\mathbf{V}} \circ g : \mathbf{V} \to \mathbf{V},
\]
and an automorphism $a : \mathbf{E}_{\nu} \to \mathbf{E}_{\nu}$ satisfying
\[
\text{for all } x \in \mathbf{E}_{\nu}, \ h \in H, \quad (a(x))_{h} \circ a_{\mathbf{V}} = a_{\mathbf{V}} \circ x_{a^{-1}(h)} : \mathbf{V}_{s(a^{-1}(h))} \to \mathbf{V}_{t(h)}.
\]
Let $\mathcal{S}_{\nu}$ denote the set of finite sequences $\boldsymbol{\nu} = (\nu^{1}, \nu^{2}, \ldots, \nu^{s})$, where each $\nu^{l} = a_{l} i_{l}$ with $a_{l} \in \mathbb{N}_{\geqslant 1}$ and $i_{l} \in I$. If
\[
\sum_{1 \leqslant l \leqslant s} \nu^{l} = \nu, \nu\in \bbN I,
\]
we call $\boldsymbol{\nu}$ a flag type of $\nu$, or equivalently of $\mathbf{V}$, and denote by $a(\boldsymbol{\nu})$ the sequence $(a(\nu^{1}), a(\nu^{2}), \ldots, a(\nu^{s}))$.

For a flag type $\boldsymbol{\nu}$ of $\mathbf{V}$, the corresponding flag variety $\mathcal{F}_{\boldsymbol{\nu},\Omega}$ is a smooth variety consisting of pairs $(x, f)$, where $x \in \mathbf{E}_{\mathbf{V},\Omega}$ and $f = ( 0= \mathbf{V}^{s} \subseteq \mathbf{V}^{s-1} \subseteq \cdots \subseteq \mathbf{V}^{0} = \mathbf{V})$ is an $I$-graded filtration such that $x(\mathbf{V}^{l}) \subseteq \mathbf{V}^{l}$ and
$
\dim \mathbf{V}^{l-1} / \mathbf{V}^{l} = \nu^{l}
$
for all $l$. In this case, there is a proper map
\[
\pi_{\boldsymbol{\nu}} : \mathcal{F}_{\boldsymbol{\nu},\Omega} \to \mathbf{E}_{\mathbf{V},\Omega}, \quad (x, f) \mapsto x.
\]
Hence, by the decomposition theorem \cite[Theorem 5.4.5]{BBD}, the complex
\[
L_{\boldsymbol{\nu}} = (\pi_{\boldsymbol{\nu},\Omega})_{!} \overline{\mathbb{Q}}_{l}|_{\mathcal{F}_{\boldsymbol{\nu},\Omega}} [\dim \mathcal{F}_{\boldsymbol{\nu},\Omega}]
\]
is a semisimple complex on $\mathbf{E}_{\mathbf{V},\Omega}$, where $\overline{\mathbb{Q}}_{l}|_{\mathcal{F}_{\boldsymbol{\nu},\Omega}}$ denotes the constant sheaf on $\mathcal{F}_{\boldsymbol{\nu},\Omega}$.

\begin{definition}
Let $\mathcal{P}_{\mathbf{V}}$ denote the set of simple perverse sheaves $L$ in $\mathcal{D}^{b}_{G_{\mathbf{V}}}(\mathbf{E}_{\mathbf{V},\Omega})$ satisfying the following condition: $L$ is a direct summand, up to shift, of some $L_{\boldsymbol{\nu}}$ associated with a flag type $\boldsymbol{\nu}$ of $\mathbf{V}$. Let $\mathcal{Q}_{\mathbf{V}}$ denote the full subcategory of $\mathcal{D}^{b}_{G_{\mathbf{V}}}(\mathbf{E}_{\mathbf{V},\Omega})$ consisting of direct sums, with shifts, of simple perverse sheaves in $\mathcal{P}_{\mathbf{V}}$. Following the notation of~\cite{OSNotes}, we call $\mathcal{Q}_{\mathbf{V}}$ Lusztig's category of sheaves associated with the quiver $Q$ and dimension vector $\nu$.
\end{definition}
The admissible automorphism $a$ induces a periodic functor $a^*$ on $\mq_{\bfV}$, because $a^*L_{a(\boldsymbol{\nu})}\cong L_{\boldsymbol{\nu}}$. We now introduce the periodic category $\widetilde{\mq_{\bfV}}$ associated with this functor.
\begin{definition}
   $\widetilde{\mq_{\bfV}}$ is a category whose objects are pairs $(B, \phi)$, where $B \in \mq_{\bfV}$ and 
\[
\phi : a^* B \rightarrow B
\]
is a morphism satisfying
\[
\phi \circ a^* \phi \circ \cdots \circ (a^*)^{n-1} \phi = \operatorname{id}.
\]
A morphism $f : (B, \phi) \to (B', \phi')$ in $\widetilde{\mq_{\bfV}}$ is a morphism $f : B \to B'$ in $\mq_{\bfV}$ satisfying
\[
f \circ \phi = \phi' \circ a^*(f).
\]
By Chapter~12 of \cite{lusztig2010introduction}, for the flag complexes $L_{\boldsymbol{\nu}}$, the following diagram is cartesian:
\[
\begin{tikzcd}
\mathcal{F}_{\boldsymbol{\nu},\Omega} \arrow[r, "a"] \arrow[d, "\pi_{\boldsymbol{\nu}}"] & \mathcal{F}_{a\boldsymbol{\nu},\Omega} \arrow[d, "\pi_{a\boldsymbol{\nu}}"] \\
\mathbf{E}_{\nu,\Omega} \arrow[r, "a"] & \mathbf{E}_{a\nu,\Omega}
\end{tikzcd}
\]
In this situation, we have
\[
a^*(\pi_{a\boldsymbol{\nu}})_{!}\overline{\mathbb{Q}}_{l}|_{\mathcal{F}_{a\boldsymbol{\nu},\Omega}}
\cong (\pi_{\boldsymbol{\nu}})_{!}a^*\overline{\mathbb{Q}}_{l}|_{\mathcal{F}_{\boldsymbol{\nu},\Omega}}
\cong (\pi_{\boldsymbol{\nu}})_{!}\overline{\mathbb{Q}}_{l}|_{\mathcal{F}_{\boldsymbol{\nu},\Omega}}.
\]
This yields a map
\[
a^* L_{a\boldsymbol{\nu}} \longrightarrow L_{\boldsymbol{\nu}},
\]
which we denote by $\phi_0$. If $a\boldsymbol{\nu}=\boldsymbol{\nu}$, then $(L_{\boldsymbol{\nu}},\phi_0)$ is fixed under Verdier duality $\mathbf{D}$, as follows from \cite[Lemma 12.3.2, Lemma 12.4.3]{lusztig2010introduction}.
\end{definition}
We denote by $\widetilde{P_{\bfV}}$ the full subcategory of $\widetilde{\mq_{\bfV}}$ consisting of objects $(B,\phi)$ with $B\in \mP_{\bfV}$.
\section{The folding extension algebra from $\widetilde{\mq_{\bfV}}$}
In this section, we consider the Ext-algebra arising from all Lusztig sheaves on $\mathbf{E}_{\mathbf{V}}$, where $\dim \mathbf{V} = \nu,$ $a(\nu)=\nu.$ We let $\zeta$ be a primitive $n$-th root of unity. Set $\mo=\bbZ[\zeta]$.
\subsection{The constructions of folding extension algebras}

Set
\[
L = \bigoplus_{\boldsymbol{\nu}\in \mathcal{S}} L_{\boldsymbol{\nu}}.
\]
The map $\phi_0 : a^* L_{a\boldsymbol{\nu}} \to L_{\boldsymbol{\nu}}$ naturally induces a map
\[
\phi_0 : a^* L \to L.
\]
Denote by
\[
R(\nu) = \mathrm{Ext}^{\bullet}_{\mathcal{D}^{b}_{\mathbf{G}_{\mathbf{V}}}(\mathbf{E}_{\mathbf{V}})}(L, L)
\]
the corresponding graded algebra. The map $\phi_0$ induces an algebra automorphism of $R(\nu)$ via the following diagram:
\[
\begin{tikzcd}
L \arrow[r, "f"] & L[n] \\
a^* L \arrow[u, "\phi_0"] & a^* L[n] \arrow[u, "\phi_0\text{[n]}"] \\
f \arrow[r, maps to] & (a^*)^{-1}(\phi_0[n]^{-1} \circ f \circ \phi_0)
\end{tikzcd}
\]
We denote this automorphism by
\[
\mathbf{a} : R(\nu) \to R(\nu).
\]
Since $(a^*)^{-n}(\phi_0) \circ \cdots \circ {a^*}^{-1}(\phi_0) = \operatorname{id}$, it follows that $\mathbf{a}^n = \operatorname{id}$.

Following McNamara's construction in \cite{https://doi.org/10.1112/jlms.12218}, for each $R(\nu)$-module $M$, one defines
$
\mathbf{a}^* M
$
via the composition $\rho_M \circ \mathbf{a}$, where $\rho_M : R(\nu) \to \mathrm{End}_{\overline{\bbq_l}}(M)$ denotes the representation. $\mathbf{a}^*$ is a functor between category $R(\nu)\text{-}\mathrm{mod}$ and itself. 
\begin{definition}
    The category $R(\nu)\text{-}\mathrm{mod}^\mathbf{a}$ is defined as follows: its objects are pairs $(M, \psi)$, where
\[
\psi_M : \mathbf{a}^* M \to M
\]
satisfies
\[
 \psi \circ \mathbf{a}^* \psi \circ \cdots \circ (\mathbf{a}^*)^{n-1} \psi= \operatorname{id}.
\]
Since $\psi$ is an element of $\mathrm{End}_{\overline{\bbq_l}}(M)$ and $\mathbf{a}^* \psi = \psi$ as a linear map, this condition reduces to
\[
\psi^n = \operatorname{id}.
\]
A morphism $f \in \mathrm{Hom}_{R(\nu)}(M, N)$ between two objects $(M, \psi)$ and $(N, \psi')$ in $R(\nu)\text{-}\mathrm{mod}^\mathbf{a}$ is required to satisfy
\[
f \circ \psi = \psi' \circ \mathbf{a}^* f.
\]
Equivalently, for all $r \in R(\nu)$ and $m \in M$, we have
\[
\psi(\rho_M(\mathbf{a}(r)) m) = \rho_M(r) \psi(m),
\]
which in $\mathrm{End}_{\overline{\bbq_l}}(M)$ reads
\[
\psi \circ \rho_M(\mathbf{a}(r)) = \rho_M(r) \circ \psi.
\]
\end{definition}
$ \langle \mathbf{a} \rangle= \langle \mathbf{a}^{-1} \rangle$ is a cyclic group of order $n.$ The skew group algebra $R(\nu)\langle\mathbf{a}\rangle$ of $R(\nu)$ with respect to $ \langle \mathbf{a} \rangle$ is defined following \cite{REITEN1985224}.
\begin{definition}
The skew group algebra $R(\nu)\langle\mathbf{a}\rangle$ is defined as the quotient of the free algebra $\langle R(\nu), \mathbf{a} \rangle$ by the relations
\[
\mathbf{a} r = \mathbf{a}(r) \mathbf{a}
\]
for all $r \in R(\nu)$. That is,
\[
R(\nu)\langle\mathbf{a}\rangle = \langle R(\nu), \mathbf{a} \rangle \big/ \langle \mathbf{a} r - \mathbf{a}(r) \mathbf{a} \mid r \in R(\nu) \rangle.
\]
\end{definition}
We now show that the category $R(\nu)\text{-}\mathrm{mod}^\mathbf{a}$ is equivalent to the category of $R(\nu)\langle\mathbf{a}\rangle$-modules.

\begin{proposition}
The category $R(\nu)\text{-}\mathrm{mod}^\mathbf{a}$ is equivalent to the category of $R(\nu)\langle\mathbf{a}\rangle$-modules.
\end{proposition}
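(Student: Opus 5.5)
We will construct mutually inverse functors $F: R(\nu)\text{-}\mathrm{mod}^\mathbf{a}\to R(\nu)\langle\mathbf{a}\rangle\text{-}\mathrm{mod}$ and $G$ in the opposite direction. Both will be the identity on underlying $\overline{\bbq_l}$-vector spaces and on morphisms. The proof then reduces to checking that the defining relations match up.

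First we fix the convention. The skew group algebra imposes $\mathbf{a}^n=1$ in addition to $\mathbf{a}r=\mathbf{a}(r)\mathbf{a}$, since $\langle\mathbf{a}\rangle$ is the cyclic group of order $n$. Given $(M,\psi)$, we let $R(\nu)$ act by $\rho_M$ and let the generator act by $\psi^{-1}$; equivalently, $\mathbf{a}^{-1}$ acts by $\psi$. The compatibility $\psi\circ\rho_M(\mathbf{a}(r))=\rho_M(r)\circ\psi$ rewrites as $\psi^{-1}\rho_M(r)=\rho_M(\mathbf{a}(r))\psi^{-1}$, which is exactly the relation $\mathbf{a}r=\mathbf{a}(r)\mathbf{a}$. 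The condition $\psi^n=\mathrm{id}$ gives $\mathbf{a}^n=1$. By the universal property of the quotient of the free algebra $\langle R(\nu),\mathbf{a}\rangle$, this defines an $R(\nu)\langle\mathbf{a}\rangle$-module structure on $M$. Conversely, given an $R(\nu)\langle\mathbf{a}\rangle$-module $M$, we restrict to $R(\nu)$ and set $\psi:=\rho_M(\mathbf{a})^{-1}$. The same computation read backwards shows that $\psi:\mathbf{a}^*M\to M$ is $R(\nu)$-linear and that $\psi^n=\mathrm{id}$.

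On morphisms, an $R(\nu)$-linear map $f$ satisfies $f\circ\psi=\psi'\circ\mathbf{a}^*f$, and since $\mathbf{a}^*f=f$ as a linear map, this says $f$ commutes with $\psi$. That holds if and only if $f$ commutes with $\psi^{-1}$, i.e.\ with the action of $\mathbf{a}$. Hence $F$ and $G$ are fully faithful on morphisms, and $F\circ G$ and $G\circ F$ are literally the identity functors. So we obtain an isomorphism, in particular an equivalence, of categories. If one prefers $\mathbf{a}\mapsto\psi$ instead of $\psi^{-1}$, one composes with the isomorphism $R(\nu)\langle\mathbf{a}\rangle\cong R(\nu)\langle\mathbf{a}^{-1}\rangle$ noted before the statement, using $\langle\mathbf{a}\rangle=\langle\mathbf{a}^{-1}\rangle$.

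The only real obstacle is bookkeeping. We must check that the direction of $\psi$ and the twist $\mathbf{a}^*M=(M,\rho_M\circ\mathbf{a})$ give $\mathbf{a}r=\mathbf{a}(r)\mathbf{a}$ rather than $r\mathbf{a}=\mathbf{a}\,\mathbf{a}(r)$. We must also confirm that $\mathbf{a}^n=1$, which holds because $\mathbf{a}^n=\mathrm{id}$ on $R(\nu)$, is part of the group-algebra structure. If gradings are tracked, $\psi$ should be taken homogeneous of degree $0$, which is automatic because $\mathbf{a}$ preserves degree.
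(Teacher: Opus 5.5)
This is correct and follows the paper's proof essentially verbatim. As there, you let $\mathbf{a}$ act by $\psi^{-1}$, rewrite the compatibility $\psi\circ\rho_M(\mathbf{a}(r))=\rho_M(r)\circ\psi$ as the relation $\mathbf{a}r=\mathbf{a}(r)\mathbf{a}$, and keep morphisms unchanged, which gives an isomorphism, and hence an equivalence, of categories. Your explicit remark that $\mathbf{a}^n=1$ belongs among the defining relations is a useful clarification: the paper's presentation of $R(\nu)\langle\mathbf{a}\rangle$ leaves it implicit, yet its step ``since $\psi^n=\mathrm{id}$, this is well-defined'' relies on it.
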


\begin{proof}
Given an object $(M,\psi)$ in $R(\nu)\text{-}\mathrm{mod}^\mathbf{a}$, recall that $\mathbf{a}^*M$ corresponds to $\rho_M \circ \mathbf{a}$. Define a map
\[
\Theta : \langle R(\nu), \mathbf{a} \rangle \to \mathrm{End}_{\overline{\bbq_l}}(M)
\]
by setting $\Theta(r) = \rho_M(r)$ and $\Theta(\mathbf{a}) = \psi^{-1}$. Since $\psi^n = \mathrm{id}$, this is well-defined. Moreover, we have
\[
\psi \circ \rho_M(\mathbf{a}(r)) = \rho_M(r) \circ \psi,
\]
which translates to
\[
\Theta(\mathbf{a}^{-1} \cdot \mathbf{a}(r)) = \Theta(r \cdot \mathbf{a}^{-1}).
\]
Hence, $\Theta$ factors through $R(\nu)\langle\mathbf{a}\rangle$, giving $M$ the structure of an $R(\nu)\langle\mathbf{a}\rangle$-module. For a morphism $f : M \to N$ in $R(\nu)\text{-}\mathrm{mod}^\mathbf{a}$ satisfying
\[
f \circ \psi = \psi' \circ \mathbf{a}^* f,
\]
we have $f(\mathbf{a}^{-r} m) = \mathbf{a}^{-r} f(m)$ for all $\mathbf{a}^{-r} \in \langle\mathbf{a} \rangle$, $m \in M$, and hence $f$ naturally induces an $R(\nu)\langle\mathbf{a}\rangle$-module homomorphism.

Conversely, given an $R(\nu)\langle\mathbf{a}\rangle$-module $M$, restricting the action yields an $R(\nu)$-module. Let $\psi$ denote the action of $\mathbf{a}^{-1}$ on $M$. Since
\[
\mathbf{a}^{-1} \mathbf{a}(r) = r \mathbf{a}^{-1},
\]
it follows that
\[
\psi \circ \rho_M(\mathbf{a}(r)) = \rho_M(r) \circ \psi.
\]
A morphism $f$ between two $R(\nu)\langle\mathbf{a}\rangle$-modules $M$ and $N$ clearly satisfies the condition for a morphism between $(M,\psi)$ and $(N,\psi')$ (where $\psi'$ is the action of $\mathbf{a}^{-1}$ on $N$).

Hence, the categories are equivalent.
\end{proof}
\subsection{Projective modules in the folding extension algebra}
The object
\[
R(\nu) \xrightarrow{\mathbf{a}^{-1}} R(\nu)
\]
belongs to $R(\nu)\text{-}\mathrm{mod}^\mathbf{a}$. Under the equivalence above, it is a projective object in the category of $R(\nu)\langle\mathbf{a}\rangle$-modules.

\begin{lemma}\label{pro}
The objects
\[
R(\nu) \xrightarrow{\zeta^i\mathbf{a}^{-1}} R(\nu),\ i=0,\cdots,n-1,
\]
are projective in the category $R(\nu)\langle\mathbf{a}\rangle\text{-}\mathrm{mod}$ under the above equivalence. Moreover, their direct sum is $R(\nu)\langle\mathbf{a}\rangle$.
\end{lemma}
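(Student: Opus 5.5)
Write $A=R(\nu)\langle\mathbf{a}\rangle$ and $M_i$ for the object $(R(\nu),\zeta^i\mathbf{a}^{-1})$, i.e. the $A$-module whose underlying $R(\nu)$-module is the regular module and on which $\mathbf{a}$ acts by $\zeta^{-i}\mathbf{a}$.

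The plan is to decompose the regular module $A$ explicitly using the idempotents of the group algebra of $\langle\mathbf{a}\rangle$ inside $A$. Since $\mathbf{a}^n=\operatorname{id}$, the element $\mathbf{a}$ is a unit of order $n$ in $A$. The relation $\mathbf{a}r=\mathbf{a}(r)\mathbf{a}$ shows that every element of $A$ can be written as $\sum_{j=0}^{n-1} r_j\mathbf{a}^j$. Using the standard description of skew group algebras (following \cite{REITEN1985224}), this expression is unique, so $A\cong\bigoplus_{j=0}^{n-1}R(\nu)\mathbf{a}^j$ as a left $R(\nu)$-module. We work over $\overline{\bbq_l}\supset\mo$, where $n$ is invertible and $\zeta$ exists. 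Define
\[
e_i=\frac{1}{n}\sum_{j=0}^{n-1}\zeta^{-ij}\mathbf{a}^j,\qquad i=0,\dots,n-1 .
\]
The usual character computation shows that the $e_i$ are orthogonal idempotents with $\sum_i e_i=1$ and $\mathbf{a}e_i=\zeta^i e_i$. Consequently $A=\bigoplus_i Ae_i$ as left $A$-modules, and each $Ae_i$ is projective, being a direct summand of the free module $A$.

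Next we identify $Ae_i$ with $M_i$ (possibly after reindexing $i\mapsto -i$, depending on the sign convention). Consider the map $R(\nu)\to Ae_i$, $r\mapsto re_i$. It is a homomorphism of left $R(\nu)$-modules, and it is surjective because $r\mathbf{a}^je_i=\zeta^{ij}re_i$. It is injective because, by the uniqueness of the normal form, the coefficient of $\mathbf{a}^0$ in $re_i$ is $r/n$. Transporting the action of $\mathbf{a}$ gives
\[
\mathbf{a}\cdot re_i=\mathbf{a}(r)\mathbf{a}e_i=\zeta^i\mathbf{a}(r)e_i .
\]
So $\mathbf{a}$ acts on $R(\nu)$ by $\zeta^i\mathbf{a}$, and the corresponding $\psi$ (the action of $\mathbf{a}^{-1}$) is $\zeta^{-i}\mathbf{a}^{-1}$. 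Under the equivalence of the Proposition, $Ae_i$ therefore corresponds to the object $R(\nu)\xrightarrow{\zeta^{-i}\mathbf{a}^{-1}}R(\nu)$. As $i$ ranges over $\bbZ/n$, so does $-i$. Hence the $M_i$ are exactly the summands $Ae_{-i}$, which gives both assertions of the lemma at once.

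The main obstacle is the normal-form/freeness claim: that $A$ is free over $R(\nu)$ on $1,\mathbf{a},\dots,\mathbf{a}^{n-1}$. The paper presents $A$ as a quotient of a free algebra, and this presentation must be read as also imposing $\mathbf{a}^n=1$, since otherwise $e_i$ is not idempotent. To settle the claim I would construct the vector space $\bigoplus_j R(\nu)\mathbf{a}^j$ with the twisted multiplication
\[
(r\mathbf{a}^j)(s\mathbf{a}^k)=r\,\mathbf{a}^j(s)\,\mathbf{a}^{j+k}.
\]
I would check that this multiplication is associative and satisfies the defining relations. The universal property of the presentation then gives a map from $A$ onto this algebra, and the spanning argument shows this map is an isomorphism. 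A minor point to verify is that the morphisms $(M,\psi)\to(N,\psi')$ correspond to $A$-linear maps, so that projectivity transfers under the equivalence; this is immediate from the Proposition.
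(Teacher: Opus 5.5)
Your proposal is correct and follows essentially the paper's argument: the paper's summand $(R(\nu)\langle\mathbf{a}\rangle)_i=\{\sum_j\zeta^{ij}r\mathbf{a}^j\}$ is exactly your $Ae_{-i}$, and its map $f_i(r)=\sum_j\zeta^{ij}r\mathbf{a}^j$ is $n$ times your $r\mapsto re_{-i}$, which gives the same identification with $R(\nu)\xrightarrow{\zeta^i\mathbf{a}^{-1}}R(\nu)$. Your explicit idempotents, the normal-form (freeness) argument, and your remark that $\mathbf{a}^n=1$ must be imposed in the presentation make rigorous the points the paper treats as evident.
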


\begin{proof}
The object $R(\nu) \xrightarrow{\mathbf{a}^{-1}} R(\nu)$, viewed as an $R(\nu)\langle\mathbf{a}\rangle$-module, corresponds to a direct summand of $R(\nu)\langle\mathbf{a}\rangle$.

We have the decomposition
\[
R(\nu)\langle\mathbf{a}\rangle = \bigoplus_{i=0}^{n-1} (R(\nu)\langle\mathbf{a}\rangle)_i,
\]
where
\[
(R(\nu)\langle\mathbf{a}\rangle)_i := \left\{ \sum_{j=0}^{n-1} \zeta^{ij} r \mathbf{a}^j \mid r \in R(\nu) \right\}.
\]
The decomposition is evident, and it remains to verify that each $(R(\nu)\langle\mathbf{a}\rangle)_i$ is stable under the action of $a$. Indeed,
\[
\mathbf{a} \left( \sum_{j=0}^{n-1} \zeta^{ij} r \mathbf{a}^j \right)
= \sum_{j=0}^{n-1} \zeta^{ij} \mathbf{a}(r) \mathbf{a}^{j+1}
= \sum_{j=0}^{n-1} \zeta^{ij} \mathbf{a}(\zeta^{-i} r) \mathbf{a}^j,
\]
so stability holds.

It remains to check that
\[
R(\nu) \xrightarrow{\zeta^{i} \mathbf{a}^{-1}} R(\nu)
\]
corresponds to the $R(\nu)\langle\mathbf{a}\rangle$-module $(R(\nu)\langle\mathbf{a}\rangle)_i$. This follows from the map
\[
f_i : R(\nu) \to (R(\nu)\langle\mathbf{a}\rangle)_i, \quad f_i(r) = \sum_{j=0}^{n-1} \zeta^{ij} r \mathbf{a}^j,
\]
which gives an $R(\nu)\langle\mathbf{a}\rangle$-module isomorphism assuming $a$ acts as $\zeta^{-i}a$ on $R(\nu).$

The lemma follows.
\end{proof}
Given a complex of sheaves $\mathfrak{F}$, suppose there exists a map
\[
a^* \mathfrak{F} \xrightarrow{\phi} \mathfrak{F}
\]
such that
\[
\phi \circ a^* \phi \circ \cdots \circ (a^*)^{n-1} \phi = \mathrm{id}.
\]
Then consider $\mathrm{Ext}^{\bullet}(\mathfrak{F}, L)$, equipped with the map $\hat{\phi}$ induced by $\phi$ and $\phi_0$, defined by
\[
\hat{\phi}(f) := \phi_0[n] \circ a^*(f) \circ \phi^{-1},\ f\in \mathrm{Ext}^{n}(\mathfrak{F}, L).
\]
Notice that
\[
\phi_0[n+m] \circ a^*\bigl( (a^*)^{-1}(\phi_0[n+m]^{-1} \circ g \circ \phi_0[n]) \circ f \bigr) \circ \phi^{-1}
= g \circ \phi_0[n] \circ a^*(f) \circ \phi^{-1},\quad g\in \mathrm{Ext}^{m}(\mathfrak{F}, L),
\]
which shows that $\hat{\phi}$ defines an isomorphism
\[
\hat{\phi} : \mathbf{a}^* \mathrm{Ext}^{\bullet}(\mathfrak{F}, L) \to \mathrm{Ext}^{\bullet}(\mathfrak{F}, L).
\]

Furthermore, we may decompose
\[
R(\nu) \xrightarrow{\mathbf{a}^{-1}} R(\nu)
\]
as the direct sum of
\[
(\mathbf{a}^* \mathrm{Ext}^{\bullet}(P, L) \xrightarrow{\hat{\phi}_P} \mathrm{Ext}^{\bullet}(P, L))
\]
and
\[
\Bigl( \mathbf{a}^* \mathrm{Ext}^{\bullet}\Bigl( \bigoplus_{k=0}^{l} (a^*)^k P, L \Bigr) 
\xrightarrow{\hat{\phi}_{\oplus}} 
\mathrm{Ext}^{\bullet}\Bigl( \bigoplus_{k=0}^{l} (a^*)^k P, L \Bigr) \Bigr),
\]
where $\phi_P : a^* P \to P$ is the $a^*$-invariant summand in $\widetilde{\mathcal{P}_{\mathbf{V}}}$ defined by Lusztig (see Proposition 12.6.3 of \cite{lusztig2010introduction}), and $\phi_{\oplus} : a^*(\bigoplus_{k=0}^{l} (a^*)^k P) \to \bigoplus_{k=0}^{l} (a^*)^k P$ is a traceless component.

\begin{corollary}\label{apro}
All indecomposable projective modules over $R(\nu)\langle\mathbf{a}\rangle$ are of the form
\[
(\mathbf{a}^* \mathrm{Ext}^{\bullet}(P, L) \xrightarrow{\zeta^i \hat{\phi}_P} \mathrm{Ext}^{\bullet}(P, L)),
\]
where $a^* P \cong P$ and $P$ is a simple perverse sheaf as a direct summand of $L$ up to shift, or
\[
\Bigl( \mathbf{a}^* \mathrm{Ext}^{\bullet}\Bigl( \bigoplus_{k=0}^{l} (a^*)^k P, L \Bigr) 
\xrightarrow{\hat{\phi}_{\oplus}} 
\mathrm{Ext}^{\bullet}\Bigl( \bigoplus_{k=0}^{l} (a^*)^k P, L \Bigr) \Bigr),
\]
where $a^* P \ncong P$, $l+1$ is the minimal positive integer such that $(a^*)^{l+1} P \cong P$, and $P$ is a simple perverse sheaf as a direct summand of $L$ up to shift.
\end{corollary}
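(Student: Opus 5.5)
Via the equivalence between $R(\nu)\text{-}\mathrm{mod}^\mathbf{a}$ and $R(\nu)\langle\mathbf{a}\rangle$-modules, every indecomposable projective $R(\nu)\langle\mathbf{a}\rangle$-module is, up to grading shift, a direct summand of the regular module $R(\nu)\langle\mathbf{a}\rangle$. Lemma~\ref{pro} decomposes the regular module as $\bigoplus_{i=0}^{n-1}(R(\nu)\xrightarrow{\zeta^i\mathbf{a}^{-1}}R(\nu))$. So it suffices to decompose each $(R(\nu),\zeta^i\mathbf{a}^{-1})$ into indecomposables and check that these are exactly the listed objects. Since $R(\nu)$ is a graded algebra with finite-dimensional graded pieces and degree bounded below, Krull--Schmidt holds for graded projectives, and the indecomposable projectives are the summands so obtained.

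\textbf{Step 1.} Write $L=\bigoplus_{P} P\otimes V_P$, where $P$ runs over $\mathcal{P}_{\mathbf{V}}$ and $V_P$ is a graded multiplicity space. Then $R(\nu)=\mathrm{Ext}^\bullet(L,L)=\bigoplus_P \mathrm{Ext}^\bullet(P,L)\otimes V_P^*$ as left $R(\nu)$-modules. The functor $a^*$ permutes $\mathcal{P}_{\mathbf{V}}$, and we group the $P$'s into $a^*$-orbits. For an orbit of length $l+1>1$, $\phi_0$ maps the summands $\bigoplus_{k=0}^{l}(a^*)^kP\otimes V_{(a^*)^kP}$ into themselves. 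Choosing bases compatibly, the structure map is $\hat{\phi}_{\oplus}$ on copies of $\mathrm{Ext}^\bullet(\bigoplus_k (a^*)^kP,L)$. For a fixed $P$ with $a^*P\cong P$, Lusztig's normalized $\phi_P$ (Proposition 12.6.3) gives $\phi_0|=\phi_P\otimes\sigma$, where $\sigma$ is an automorphism of $V_P$ of order dividing $n$. Diagonalizing $\sigma$ into eigenvalues $\zeta^j$ splits this part into copies of $(\mathrm{Ext}^\bullet(P,L),\zeta^j\hat\phi_P)$. This is the displayed decomposition preceding the corollary. Twisting by $\zeta^i$ only rescales $\hat\phi_P$ by $\zeta^i$, and for the non-fixed orbits the result is isomorphic to $\hat\phi_\oplus$ again, via the rescaling $\zeta^{ik}$ on the $k$-th component. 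Hence every summand of $R(\nu)\langle\mathbf{a}\rangle$ is of one of the two listed forms.

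\textbf{Step 2: the listed objects are indecomposable.} For $(\mathrm{Ext}^\bullet(P,L),\zeta^i\hat\phi_P)$ with $a^*P\cong P$, the endomorphism ring in $R(\nu)\text{-}\mathrm{mod}$ is $\mathrm{End}^\bullet(P)^{op}$, whose degree-zero part is $\overline{\mathbb{Q}}_l$. Its equivariant part is therefore local, and the object is indecomposable. For the orbit object, an equivariant idempotent is determined by its projection to the $P$-component, because $\mathbf{a}$ cyclically permutes the components. The degree-zero endomorphism ring is $\overline{\mathbb{Q}}_l^{\,l+1}$, with $\mathbf{a}$ acting by the cyclic shift, so its invariants are $\overline{\mathbb{Q}}_l$. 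Hence the only equivariant idempotents are $0$ and $1$.

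The main obstacle I expect is the bookkeeping in Step 1: making sure that $\phi_0$ really is block-diagonal of the form $\phi_P\otimes\sigma$ with $\sigma^n=1$. This needs Lusztig's normalization that $\phi_P$ satisfies the cocycle condition, together with $(a^*)^{-n}(\phi_0)\circ\cdots\circ(a^*)^{-1}(\phi_0)=\mathrm{id}$. It also needs the traceless orbit components to be mutually isomorphic, which follows from a change of basis along the orbit that uses $\zeta^{n}=1$.
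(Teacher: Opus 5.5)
Your skeleton matches the paper's. Lemma~\ref{pro} together with the decomposition of $(R(\nu),\mathbf{a}^{-1})$ stated before the corollary reduces everything to the summands of $\bigoplus_i(R(\nu),\zeta^i\mathbf{a}^{-1})$, and then one checks indecomposability. Your Step~2 argument, that the equivariant endomorphism ring is non-negatively graded with degree-zero part $\overline{\mathbb{Q}}_l$, is a fine uniform substitute for the paper's two arguments: Varagnolo--Vasserot's Theorem~4.4 when $a^*P\cong P$, and simplicity of the top $\bigoplus_k(\mathbf{a}^*)^kS$ for orbits.

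The step that fails is your claim in Step~1 that the $\zeta^i$-twist of an orbit object is isomorphic to the untwisted one via rescaling the $k$-th component by $\zeta^{ik}$.
\begin{itemize}
\item That rescaling multiplies the maps from component $k$ to component $k+1$ ($k<l$) by $\zeta^i$. It multiplies the wrap-around map from component $l$ to component $0$ by $\zeta^{-il}$, which equals $\zeta^i$ only if $n\mid i(l+1)$.
\item No other isomorphism exists either. Let $g\colon(M,\psi)\to(M,\zeta^i\psi)$ be an isomorphism. It must preserve the $P$-component $S_0$ of the top, because the $(\mathbf{a}^*)^kS$ are pairwise non-isomorphic, and it acts on $S_0$ by a nonzero scalar $c$. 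From $g\psi=\zeta^i\psi g$ you get $c\,\psi^{l+1}|_{S_0}=\zeta^{i(l+1)}\psi^{l+1}|_{S_0}\,c$, which forces $\zeta^{i(l+1)}=1$.
\item So when $l+1<n$, each orbit carries $n/(l+1)$ pairwise non-isomorphic indecomposable projectives, one for each $R(\nu)\langle\mathbf{a}^{l+1}\rangle$-structure on $S$, as Clifford theory predicts.
\item All of them occur in $R(\nu)\langle\mathbf{a}\rangle$, since Lemma~\ref{pro} puts every twist $(R(\nu),\zeta^i\mathbf{a}^{-1})$ into the regular module. For the same reason, your ``copies of $\hat{\phi}_{\oplus}$'' need not be mutually isomorphic.
\end{itemize}
The corollary is still correct if $\phi_{\oplus}$ is read as an arbitrary orbit-permuting structure, which the paper's wording ``a traceless component'' permits. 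You should drop the uniqueness claim and say this explicitly.

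Separately, the identity ``$\phi_0|=\phi_P\otimes\sigma$'' is more than bookkeeping. For a fixed decomposition $L=\bigoplus_P P\otimes V_P$, $\phi_0$ also has components $a^*P[m]\to P'[m']$ in $\mathrm{Ext}^{m'-m}(a^*P,P')$ with $m'>m$, including $P'\not\cong a^*P$, so it is only block-triangular. Lusztig's normalization of $\phi_P$ does not remove these; you would need an inductive averaging argument using the cocycle condition and the invertibility of $n$ in $\overline{\mathbb{Q}}_l$.

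You can bypass this entirely:
\begin{itemize}
\item By Krull--Schmidt, every indecomposable graded projective $R(\nu)\langle\mathbf{a}\rangle$-module is the projective cover of its simple top.
\item The simple modules are classified by Clifford theory for $\langle\mathbf{a}\rangle$ acting on simple $R(\nu)$-modules.
\item The listed objects, with all admissible $\zeta^i\hat{\phi}_P$ and $\hat{\phi}_{\oplus}$, are visibly their projective covers.
\end{itemize}
This is the mechanism behind the paper's orbit-case argument.
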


\begin{proof}
Applying Lemma \ref{pro}, it remains only to show that these modules are indecomposable. Since $\mathrm{Ext}^{\bullet}(P, L)$ is an indecomposable $R(\nu)$-module by \cite[Theorem 4.4]{VaragnoloVasserot+2011+67+100}, it follows that
\[
(a^* \mathrm{Ext}^{\bullet}(P, L) \xrightarrow{\zeta^i \hat{\phi}_P} \mathrm{Ext}^{\bullet}(P, L))
\]
is also an indecomposable representation of $R(\nu)\langle\mathbf{a}\rangle$.

For
\[
\Bigl( a^* \mathrm{Ext}^{\bullet}\Bigl( \bigoplus_{k=0}^{l} (a^*)^k P, L \Bigr) 
\xrightarrow{\hat{\phi}_{\oplus}} 
\mathrm{Ext}^{\bullet}\Bigl( \bigoplus_{k=0}^{l} (a^*)^k P, L \Bigr) \Bigr)
\]
where $a^* P \ncong P$ and $P$ is a simple perverse sheaf, quotienting by the radical gives
\[
\mathbf{a}^* \Bigl( \bigoplus_{k=0}^{l} (\mathbf{a}^*)^k S \Bigr) \to \bigoplus_{k=0}^{l} (\mathbf{a}^*)^k S,
\]
where $S$ is a simple $R(\nu)$-module, and $\mathbf{a}^* S \ncong S$. Since $l+1$ is minimal with $(\mathbf{a}^*)^{l+1} S \cong S$, this is a simple $R(\nu)\langle\mathbf{a}\rangle$-module. The result follows.
\end{proof}
For Lusztig's induction functor,
\[
\Ind_{\nu',\nu''}^{\nu}:
\cD_{\bfG_{\bfV'}}^b(\bfE_{\bfV'})\times
\cD_{\bfG_{\bfV''}}^b(\bfE_{\bfV''})
\rightarrow \cD_{\bfG_{\bfV}}^b(\bfE_{\bfV}),
\]
where the dimension vectors of the $I$-graded vector spaces $\bfV, \bfV', \bfV''$ are $\nu, \nu', \nu''$ stable under $a$, respectively, define
\[
\bfE_{\bfV}'=\{(x,\bfW,\rho_1,\rho_2)\mid x\in \bfE_{\bfV},\ \bfW\cong \bfV''\text{ as an }I\text{-graded vector space},\
\rho_1:\bfV/\bfW\cong \bfV',\ \rho_2:\bfW\cong \bfV''\}
\]
and
\[
\bfE_{\bfV}''=\{(x,\bfW)\mid x\in \bfE_{\bfV},\ \bfW\cong \bfV''\text{ as an }I\text{-graded vector space}\}.
\]
The maps are defined by
\[
p_1(x,\bfW,\rho_1,\rho_2)=(\rho_1(x|_{\bfV/\bfW})\rho_1^{-1},\rho_2(x|_{\bfW})\rho_2^{-1}),
\quad p_2(x,\bfW,\rho_1,\rho_2)=(x,\bfW),
\quad p_3(x,\bfW)=x.
\]
$p_1$ is smooth of affine bundles and $p_2$ is $\bfG_{\bfV'}\times \bfG_{\bfV''}$ principle.
Thus we have the diagram
\[
\begin{tikzcd}
\bfE_{\bfV'}\times \bfE_{\bfV''} & \bfE_{\bfV}' \arrow[l, "p_1"'] \arrow[r, "p_2"] & \bfE_{\bfV}'' \arrow[r, "p_3"] & \bfE_{\bfV}.
\end{tikzcd}
\]
We denote the dimensions of the fibers of the smooth morphisms $p_1$ and $p_2$ by $d_1$ and $d_2$, respectively. The induction functor is defined by
\[
\Ind_{\nu',\nu''}^{\nu} := {p_3}_! {p_2}_b p_1^* [d_1 - d_2] \left( \frac{d_1 - d_2}{2} \right),
\] where ${p_2}_b$ is the inverse functor of $p_2^*.$
Since $a^* {p_3}_! \cong {p_3}_! a^*$, there is a natural transformation
\[
a^* \Ind_{\nu',\nu''}^{\nu} \rightarrow \Ind_{\nu',\nu''}^{\nu} a^*.
\]
For a fixed subspace $\bfW\subset \bfV$, $\dim \bfW=\dim \bfV''$, $\bfV\cong \bfV'\oplus\bfV''$, $P$ is the stabilizer of $\bfW$ in $G_{\bfV}$ and $F=\{x\in \bfE_{\bfV}|x(\bfW)\subset \bfW\},$ $$\bfE_{\bfV_1}\times \bfE_{\bfV_2}\xleftarrow{\kappa}F\xrightarrow{\iota}\bfE_{\bfV},$$

where $\iota$ is the embedding and $\kappa(x)=(x|_{\bfV/\bfW},x|_{\bfW})$.

Lusztig has defined the restriction functor $\mathbf{Res}_{\nu',\nu''}^{\nu}=(\kappa)_!(\iota)^*[d_1-d_2-2\dim G_{\bfV}/P].$
there is a natural transformation
\[
a^* \Res_{\nu',\nu''}^{\nu} \rightarrow \Res_{\nu',\nu''}^{\nu} a^*.
\]
We use the K\"unneth isomorphism
\[
\mathcal{H}om_{\cD^b_{\bfG_{\bfV'}\times \bfG_{\bfV''}}(\bfE_{\bfV'}\times \bfE_{\bfV''})}
(L_{\boldsymbol{\nu'}}\boxtimes L_{\boldsymbol{\nu''}},L_{\boldsymbol{\nu_1'}}\boxtimes L_{\boldsymbol{\nu_1''}})
\cong
\mathcal{H}om_{\cD^b_{\bfG_{\bfV'}}(\bfE_{\bfV'})}(L_{\boldsymbol{\nu'}},L_{\boldsymbol{\nu_1'}})
\otimes
\mathcal{H}om_{\cD^b_{\bfG_{\bfV''}}(\bfE_{\bfV''})}(L_{\boldsymbol{\nu''}},L_{\boldsymbol{\nu_1''}}).
\]
If
\[
L'=\bigoplus_{\boldsymbol{\nu}=\boldsymbol{\nu'}\boldsymbol{\nu''},\ \boldsymbol{\nu'}\in \mathcal{S}_{\nu'},\ \boldsymbol{\nu''}\in \mathcal{S}_{\nu''}} L_{\boldsymbol{\nu}}
\quad \text{and} \quad
R(\nu)_{\nu',\nu''}=\Ext^{\bullet}(L',L'),
\]
then, by \cite[4.1]{VaragnoloVasserot+2011+67+100}, there is an embedding
$R(\nu')\otimes R(\nu'')\hookrightarrow R(\nu)_{\nu',\nu''}$, and by \cite[Theorem 5.7 and 5.4.3]{doi:10.1142/S1005386712000247} this embedding is induced by $\Ind_{\nu',\nu''}^{\nu}$. Hence
\begin{equation}\label{1}
\Ind_{\nu',\nu''}^{\nu}:
\Ext^{\bullet}(L_{\boldsymbol{\nu'}}\boxtimes L_{\boldsymbol{\nu''}},L_{\boldsymbol{\nu_1'}}\boxtimes L_{\boldsymbol{\nu_1''}})
\rightarrow
\Ext^{\bullet}(L_{\boldsymbol{\nu}},L_{\boldsymbol{\nu_1}})
\end{equation}
is injective. And this injective map is compatible with $R(\nu)\langle\mathbf{a}\rangle-$module structure induced by $\hat{\phi_0}.$

The induction functor
$\ind_{\nu',\nu''}^{\nu}: \mod R(\nu')\times \mod R(\nu'')\rightarrow \mod R(\nu)$
defined in \cite[4.1]{VaragnoloVasserot+2011+67+100} sends $M\otimes N$ to
$R(\nu)\otimes_{R(\nu')\otimes R(\nu'')}(M\otimes N)$. Restricted to the projective subcategories, the injectivity in~\eqref{1} shows that $\ind_{\nu',\nu''}^{\nu}$ is induced by 
\[
\Ind_{\nu',\nu''}^{\nu}:R(\nu')\otimes R(\nu'')\rightarrow R(\nu)_{\nu',\nu''}
\] on projective modules.

The morphism
\[
\mathbf{a}^*(R(\nu)\otimes_{R(\nu')\otimes R(\nu'')}(M\otimes N))
\rightarrow R(\nu)\otimes_{R(\nu')\otimes R(\nu'')}(M\otimes N)
\]
is induced by $R(\nu)\xrightarrow{\mathbf{a}^{-1}}R(\nu)$ and by the map $\mathbf{a}^*(M\otimes N)\rightarrow M\otimes N$.

By \cite[Lemma 12.3.2]{lusztig2010introduction},
\begin{equation}\label{eq11}
    \Ind_{\nu',\nu''}^{\nu}((L_{\boldsymbol{\nu'}},\phi_0)\boxtimes (L_{\boldsymbol{\nu''}},\phi_0))=(L_{\boldsymbol{\nu}},\phi_0),
\end{equation}
gives the morphism induced by $(\Ext^{\bullet}(L_{\boldsymbol{\nu'}}\boxtimes L_{\boldsymbol{\nu''}},L_{\boldsymbol{\nu_1'}}\boxtimes L_{\boldsymbol{\nu_1''}}),\widehat{\phi_0\boxtimes\phi_0})$ and $(\Ext^{\bullet}(L,L),\hat{\phi_0}).$

The restriction functor acting on projective $R(\nu)$ modules defined in \cite{VaragnoloVasserot+2011+67+100} is $$\res_{\nu',\nu''}^{\nu}(\Ext^{\bullet}(P,L))=\Ext^{\bullet}(P,\Ind_{\nu',\nu''}^{\nu}(L'\boxtimes L''))=\Ext^{\bullet}(\Res_{\nu',\nu''}^{\nu}(P),L'\boxtimes L'')$$ as in \cite[Definition 7.4]{Bi2024}, where $L'$ and $L''$ are the direct sum of flag sheaves complexes on $\bfE_{\bfV"}$ and $\bfE_{\bfV''}.$
It induce a restriction functor on projective $R(\nu)\langle \mathbf{a}\rangle$ modules.

We denote by $\mathcal{P}_{\nu}$ the full subcategory of $R(\nu)\langle\mathbf{a}\rangle$-modules consisting of modules of finite projective dimension, and by $\mathcal{L}_{\nu}$ the full subcategory consisting of modules finitely generated by simple modules.

Next, following the notation in \cite{https://doi.org/10.1112/jlms.12218}, we consider two kinds of dualities, both denoted by $\mathbb{D}$. For an object $(M, \phi)$ in $\mathcal{P}_{\nu}$, its dual is defined as
\[
\mathbb{D}M = \mathrm{Hom}_{R(\nu)\text{-}\mathrm{mod}}(M, R(\nu))
\]
(without requiring grading), equipped with the natural $R(\nu)$-module structure: for $\lambda\in \mathrm{Hom}_{R(\nu)\text{-}\mathrm{mod}}(M, R(\nu))$ and $r\in R(\nu)$, define
\[
(r\cdot \lambda)(m)=\lambda(rm),
\]
where the $R(\nu)$-action is obtained through the contravariant isomorphism.

The map $\phi$ induces a map
\[
(\mathbb{D}\phi)^{-1} : \mathbf{a}^* \mathrm{Hom}_{R(\nu)\text{-}\mathrm{mod}}(M, R(\nu)) \to \mathrm{Hom}_{R(\nu)\text{-}\mathrm{mod}}(M, R(\nu))
\]
given by
\[
(\mathbb{D}\phi)^{-1}(f) = f \circ \phi^{-1}.
\]

For an object $(K, \psi)$ in $\mathcal{L}_{\nu}$, define its dual as
\[
\mathbb{D}K = \mathrm{Hom}_{\overline{\bbq_l}}(K, {\overline{\bbq_l}}).
\]
The map $\psi$ induces a map
\[
(\mathbb{D}\psi)^{-1} : \mathbf{a}^* \mathrm{Hom}_{\overline{\bbq_l}}(K, {\overline{\bbq_l}}) \to \mathrm{Hom}_{\overline{\bbq_l}}(K, {\overline{\bbq_l}})
\]
which sends $f \in \mathrm{Hom}_{\overline{\bbq_l}}(K, {\overline{\bbq_l}})$ to $f \circ \psi^{-1}$.

We now present a class of projective $R(\nu)\langle\mathbf{a}\rangle$-modules that are invariant under duality.

\begin{proposition}\label{dual}
Let $(P, \phi_P)$ be an object in $\widetilde{\mathcal{P}_{\mathbf{V}}}$ with $P$ a simple perverse sheaf, and suppose that $\mathbf{D}(P, \phi_P) \cong (P, \phi_P)$ under Verdier duality. Then $(\mathrm{Ext}^{\bullet}(P, L), \hat{\phi_P})$ is self-dual under the duality $\mathbb{D}$.
\end{proposition}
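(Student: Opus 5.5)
The plan is to reduce the statement to the symmetric (unfolded) case of Varagnolo--Vasserot and then check that the identifications there are compatible with the periodic data $\hat{\phi_P}$ and $(\mathbb{D}\hat{\phi_P})^{-1}$.

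\emph{Step 1: the unfolded isomorphism.} Since $P$ is a direct summand of $L$, the module $\mathrm{Ext}^{\bullet}(P,L)$ is projective, so $\mathbb{D}$ applies. Composition of Ext's gives a pairing
\[
\mathrm{Ext}^{\bullet}(L,P)\otimes \mathrm{Ext}^{\bullet}(P,L)\to \mathrm{Ext}^{\bullet}(L,L)=R(\nu),
\]
and hence an $R(\nu)$-isomorphism
\[
\Theta:\mathrm{Ext}^{\bullet}(L,P)\xrightarrow{\sim}\mathrm{Hom}_{R(\nu)}(\mathrm{Ext}^{\bullet}(P,L),R(\nu)).
\]
To see that $\Theta$ is an isomorphism, reduce to $P=L$, where it is the statement $\mathrm{Hom}_{R(\nu)}(R(\nu),R(\nu))=R(\nu)$, and then pass to direct summands. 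This is the argument of \cite{VaragnoloVasserot+2011+67+100}.

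Next, Verdier duality $\mathbf{D}$ is a contravariant autoequivalence with $\mathbf{D}L\cong L$, since each $L_{\boldsymbol{\nu}}$ is self-dual as a proper pushforward of a shifted constant sheaf on a smooth variety. It therefore gives an identification
\[
\mathrm{Ext}^{\bullet}(L,P)\cong \mathrm{Ext}^{\bullet}(\mathbf{D}P,\mathbf{D}L)\cong \mathrm{Ext}^{\bullet}(P,L).
\]
The $R(\nu)$-action on the left is twisted by the anti-involution of $R(\nu)$ induced by $\mathbf{D}$. This twist is exactly the "contravariant isomorphism" used in defining the module structure on $\mathbb{D}M$. Combining the two steps gives an $R(\nu)$-isomorphism $\mathrm{Ext}^{\bullet}(P,L)\cong\mathbb{D}\,\mathrm{Ext}^{\bullet}(P,L)$.

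\emph{Step 2: compatibility with the $\mathbf{a}$-structure.} It remains to show that this isomorphism intertwines $\hat{\phi_P}$ with $(\mathbb{D}\hat{\phi_P})^{-1}$, the map $f\mapsto f\circ\hat{\phi_P}^{-1}$. I will check this in two parts.

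\begin{enumerate}
\item The pairing $\Theta$ is $a$-equivariant. For $g\in\mathrm{Ext}(L,P)$ and $f\in\mathrm{Ext}(P,L)$, the element $\mathbf{a}(g\circ f)$ is conjugation by $\phi_0$ and $a^*$. Inserting $\phi_P^{-1}\phi_P$ in the middle gives
\[
\mathbf{a}(g\circ f)=\check\phi(g)\circ\hat{\phi_P}(f),
\]
where $\check\phi(g)=\phi_P\circ a^*g\circ\phi_0^{-1}$ is defined analogously. Hence under $\Theta$ the natural structure $\check\phi$ on $\mathrm{Ext}(L,P)$ corresponds to $f\mapsto f\circ\hat{\phi_P}^{-1}$, up to the inverse conventions.
\item The Verdier identification carries $\check\phi$ to $\hat{\phi_P}$. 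This is where both hypotheses enter. First, $(L_{\boldsymbol{\nu}},\phi_0)$ is $\mathbf{D}$-fixed, by \cite[Lemma 12.3.2, Lemma 12.4.3]{lusztig2010introduction} as recalled above; the components of $L$ permuted by $a$ are handled the same way, since $\mathbf{D}$ commutes with $a^*$. Second, $\mathbf{D}(P,\phi_P)\cong(P,\phi_P)$, which means $\mathbf{D}\phi_P=\phi_P^{-1}$ under the chosen isomorphism $\mathbf{D}P\cong P$. Applying $\mathbf{D}$ to $\phi_P\circ a^*g\circ\phi_0^{-1}$ and using $\mathbf{D}a^*=a^*\mathbf{D}$ then yields $\phi_0\circ a^*(\mathbf{D}g)\circ\phi_P^{-1}=\hat{\phi_P}(\mathbf{D}g)$.
\end{enumerate}

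\emph{Main obstacle.} I expect part 2 to be the hardest. One must fix the isomorphisms $\mathbf{D}P\cong P$ and $\mathbf{D}L\cong L$ compatibly, so that no scalar (a root of unity $\zeta^i$) appears. Otherwise the dual would come out as the twisted module $(\mathrm{Ext}^{\bullet}(P,L),\zeta^i\hat{\phi_P})$ of Corollary~\ref{apro} rather than the original one. My first attempt will be to choose $\mathbf{D}P\cong P$ as the restriction of the fixed isomorphism $\mathbf{D}L\cong L$ to the summand $P$. Then the hypothesis $\mathbf{D}(P,\phi_P)\cong(P,\phi_P)$ should pin down the scalar as $1$: $P$ is simple, so the isomorphism is unique up to a scalar, and the cocycle condition $\phi_P\circ a^*\phi_P\circ\cdots=\mathrm{id}$ together with the hypothesis forces that scalar to be trivial.
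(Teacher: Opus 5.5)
Your proposal is correct and follows the paper's route. You identify $\mathrm{Hom}_{R(\nu)}(\mathrm{Ext}^{\bullet}(P,L),R(\nu))$ with $\mathrm{Ext}^{\bullet}(L,P)$, carrying the structure $f\mapsto\phi_P\circ a^*(f)\circ\phi_0^{-1}$, and then use Verdier self-duality of $(L,\phi_0)$ and $(P,\phi_P)$ to match it with $(\mathrm{Ext}^{\bullet}(P,L),\hat{\phi_P})$. The paper phrases the second step through $\mathbf{D}((L,\phi_0)\otimes\mathbf{D}(P,\phi_P))$, but it is the same argument, and your version supplies more of the detail.

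The scalar problem you flag is not actually an obstacle, for two reasons. Only the isomorphism $\mathbf{D}L\cong L$ enters the anti-involution. The compatibility $\iota_P\circ(\mathbf{D}\phi_P)^{-1}=\phi_P\circ a^*\iota_P$ is unchanged by rescaling $\iota_P$. So the isomorphism supplied by the hypothesis can be used as is, and you do not need the somewhat ill-defined step of restricting $\mathbf{D}L\cong L$ to a summand.
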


\begin{proof}
Indeed,
\[
\mathrm{Hom}_{R(\nu)\text{-}\mathrm{mod}}(\mathrm{Ext}^{\bullet}(P, L), \mathrm{Ext}^{\bullet}(L, L)) = \mathrm{Ext}^{\bullet}(L, P),
\]
and the induced map is given by the diagram:
\[
\begin{tikzcd}
a^* L \arrow[d, "\phi_0"] & a^* P \arrow[d, "\phi_P\text{[n]}"] \\
L \arrow[r, "f"] & P \\
f \arrow[r, maps to] & \phi_P[n] \circ a^*(f) \circ \phi_0^{-1}.
\end{tikzcd}
\]
Since both $(P, \phi_P)$ and $(L, \phi_0)$ are preserved under Verdier duality $\mathbf{D}$, we may describe $\mathrm{Ext}^{\bullet}(L, P)$ by $\mathbf{D}((L, \phi_0) \otimes \mathbf{D}(P, \phi_P))$ and $\mathrm{Ext}^{\bullet}(P, L)$ by $\mathbf{D}((P, \phi_P) \otimes \mathbf{D}(L, \phi_0))$. The $R(\nu)$-module structure is the one given by the definition of the dual module, and the isomorphism between $\mathbf{D}((P, \phi_P) \otimes \mathbf{D}(L, \phi_0))$ and $\mathbf{D}((L, \phi_0) \otimes \mathbf{D}(P, \phi_P))$ identifies $\hat{\phi_P}$ with $\mathbb{D}(\hat{\phi_P})^{-1}$. It follows that $(\mathrm{Ext}^{\bullet}(P, L), \hat{\phi_P})$ is self-dual under $\mathbb{D}$.
\end{proof}
We define the Grothendieck groups of $\mathcal{P}_{\nu}$ and $\mathcal{L}_{\nu}$. From the proof of Lemma~\ref{pro}, it follows that if $(\mathbf{P}, \phi)$ is a projective $R(\nu)\langle\mathbf{a}\rangle$-module, then $(\mathbf{P}, \zeta \phi)$ is also a projective $R(\nu)\langle\mathbf{a}\rangle$-module. Similarly, if $(\mathbf{L}, \psi)$ is a simple $R(\nu)\langle\mathbf{a}\rangle$-module, then $(\mathbf{L}, \zeta \psi)$ is also simple. Therefore, if $(M, \phi) \in \mathcal{P}_{\nu}$, then $(M, \zeta \phi) \in \mathcal{P}_{\nu}$; and if $(N, \psi) \in \mathcal{L}_{\nu}$, then $(N, \zeta \psi) \in \mathcal{L}_{\nu}$.

\begin{definition}
The Grothendieck group of $\mathcal{P}_{\nu}$, denoted by $\mathcal{K}(\mathcal{P}_{\nu})$, is defined as the $\mathcal{O}[t, t^{-1}]$-module generated by isomorphism classes $[(M,\phi)]$ of objects $(M,\phi)\in \mathcal{P}_{\nu}$, subject to the following relations:
\begin{enumerate}
    \item If $(M,\phi)\langle n\rangle$ denotes the $n$-th grading shift of $(M,\phi)$, impose $[(M,\phi)\langle n\rangle] = t^{n}[(M,\phi)]$.
    \item For each graded short exact sequence
    \[
    0 \to (M,\phi_M) \to (N,\phi_N) \to (K,\phi_K) \to 0
    \]
    in $R(\nu)\langle\mathbf{a}\rangle$-modules, impose $[(M,\phi_M)] + [(K,\phi_K)] = [(N,\phi_N)]$.
    \item $[(M,\zeta \phi_M)] = \zeta [(M,\phi_M)]$.
    \item $[(M \oplus a^* M \oplus \cdots \oplus (a^*)^l M, \phi)] = 0$ if $\phi$ is a permutation morphism and $l > 0$. Such elements are called traceless elements.
\end{enumerate}
Similarly, the Grothendieck group of $\mathcal{L}_{\nu}$ is denoted by $\mathcal{K}(\mathcal{L}_{\nu})$.
\end{definition}

The direct sum $\oplus_{\nu}\mathcal{K}(\mathcal{P}_{\nu})$ carries an bialgebra structure for which $\ind$ gives multiplication and $\Res$ gives comultiplication.
We have the following theorem for any symmetrizable case.

\begin{theorem}
The algebra $\oplus_{\nu}\mathcal{K}(\mathcal{P}_{\nu})$ is bialgebraic isomorphic to $\oplus_{\bfV}\mk(\widetilde{\mq_{\bfV}})$ \cite[Chapter 12]{lusztig2010introduction}. Moreover,
\[
\ind_{\nu',\nu''}^{\nu}((\Ext^{\bullet}(L_{\boldsymbol{\nu'}},\oplus_{\boldsymbol{\omega'}\in \mathcal{S}_{\nu'}}L_{\boldsymbol{\omega'}}),\hat{\phi_0})\otimes \Ext^{\bullet}((L_{\boldsymbol{\nu''}},\oplus_{\boldsymbol{\omega''}\in \mathcal{S}_{\nu''}}L_{\boldsymbol{\omega''}},\hat{\phi_0})))\cong (\Ext^{\bullet}(L_{\boldsymbol{\nu'\nu''}},\oplus_{\boldsymbol{\omega}\in \mathcal{S}_{\nu}}L_{\boldsymbol{\omega}}),\hat{\phi_0})
\]
Consequently, this algebra is isomorphic to the negative part of the quantum group associated with $(Q,a)$, and $\Ext^{\bullet}(\ ,L)$ preserves the signed basis and the crystal structure.

Under this isomorphism, the classes $[(\mathrm{Ext}^{\bullet}(P, L), \hat{\phi_P})]$, where $(P,\phi_P)\in \widetilde{P_{\bfV}}$ and $\mathbf{D}(P,\phi_P)\cong(P,\phi_P)$, give the signed canonical basis. 
\end{theorem}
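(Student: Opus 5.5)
The plan is to build a comparison map $\Theta:\oplus_{\bfV}\mk(\widetilde{\mq_{\bfV}})\to\oplus_{\nu}\mk(\mathcal{P}_{\nu})$ by $[(B,\phi)]\mapsto[(\Ext^{\bullet}(B,L),\hat{\phi})]$, using the construction of $\hat\phi$ given before Corollary \ref{apro}. Then I would check, in order, that $\Theta$ is well defined, bijective, and compatible with multiplication and comultiplication.

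\emph{Well-definedness.} The shift $[1]$ on $B$ should go to the grading shift $\langle 1\rangle$, i.e.\ to multiplication by $t$. Replacing $\phi$ by $\zeta\phi$ replaces $\hat\phi$ by $\zeta^{-1}\hat\phi$ or $\zeta\hat\phi$, depending on the convention. If needed I will twist $\Theta$ by the $\mo$-linear automorphism $\zeta\mapsto\zeta^{-1}$ so that relation (3) matches Lusztig's. A traceless object $\bigoplus_k (a^*)^kP$ with a permutation $\phi$ goes to $\bigoplus_k(\mathbf a^*)^k\Ext^\bullet(P,L)$ with a permutation map, which is traceless in the sense of relation (4). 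Additivity is clear, since $\Ext^\bullet(-,L)$ is additive on the semisimple category $\mq_{\bfV}$.

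\emph{Bijectivity.} Lusztig's group $\mk(\widetilde{\mq_{\bfV}})$ is free over $\mo[t,t^{-1}]$ modulo signs, with basis the classes $(P,\phi_P)$ for $a^*P\cong P$, where $\phi_P$ is normalized to be Verdier self-dual (\cite[12.6]{lusztig2010introduction}). Every $\mathcal{P}_\nu$-module has a finite projective resolution, so $\mk(\mathcal{P}_\nu)$ is spanned by classes of indecomposable projectives. By Corollary \ref{apro}, these are exactly the images $(\Ext^\bullet(P,L),\zeta^i\hat\phi_P)$ together with traceless modules, which vanish. So $\Theta$ is surjective. For injectivity I would use the pairing with $\mk(\mathcal{L}_\nu)$: the simple heads of the $(\Ext^\bullet(P,L),\hat\phi_P)$ are distinct simples $(S_P,\psi)$. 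Pairing then gives a dual basis up to a unitriangular (in fact diagonal) matrix, so the images are linearly independent.

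\emph{Bialgebra compatibility.} For multiplication, equation \eqref{eq11} and the injectivity in \eqref{1} give the displayed isomorphism. The module $\ind(\Ext^\bullet(L_{\boldsymbol{\nu'}},L')\otimes\Ext^\bullet(L_{\boldsymbol{\nu''}},L''))$ is $R(\nu)e_{\boldsymbol{\nu'}\boldsymbol{\nu''}}$, which equals $\Ext^\bullet(L_{\boldsymbol{\nu'\nu''}},L)$. Its $\mathbf a$-structure is induced from $\widehat{\phi_0\boxtimes\phi_0}$, and by \eqref{eq11} this is $\hat\phi_0$ for $L_{\boldsymbol{\nu'\nu''}}$. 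The flag elements $(L_{\boldsymbol\nu},\phi_0)$ with $a\boldsymbol\nu=\boldsymbol\nu$ generate Lusztig's algebra, and both products are additive. Hence $\Theta$ is multiplicative on generators, and therefore multiplicative everywhere. For comultiplication, the formula $\res(\Ext^\bullet(P,L))=\Ext^\bullet(\Res P,L'\boxtimes L'')$ together with the natural transformation $a^*\Res\to\Res a^*$ shows that $\Theta$ intertwines $\Res$ with $\res$.

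\emph{Canonical basis and crystal.} Lusztig's theorem identifies $\oplus\mk(\widetilde{\mq_\bfV})$ with $\mathbf f$ for the Cartan datum of $(Q,a)$. Under this identification the self-dual $(P,\phi_P)$ form the signed canonical basis, and the crystal operators are given by $\Res$ and $\Ind$ with the $E_i$/$F_i$ sheaves. Transporting along $\Theta$ gives the last statements, and Proposition \ref{dual} confirms that the images are $\mathbb D$-self-dual, so they are the correct normalized lifts.

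I expect the main obstacle to be injectivity and the sign/root-of-unity bookkeeping. Specifically, one must show that $\hat\phi_P$ and $\zeta^i\hat\phi_P$ give non-isomorphic modules and no unexpected relations. My first approach is to reduce to simple heads and compare traces of $\psi$ on the degree-zero part of $\Hom(\Ext^\bullet(P,L),S)$, which equals $\zeta^i$ times a fixed value.
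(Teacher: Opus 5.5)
Your proposal is correct and follows essentially the same route as the paper. It uses the same map $[(B,\phi)]\mapsto[(\Ext^{\bullet}(B,L),\hat{\phi})]$, reads bijectivity off the bases via Corollary~\ref{apro}, checks multiplicativity on flag objects using \eqref{eq11} and the Varagnolo--Vasserot identification, and gets the basis statement from Corollary~\ref{apro} and Proposition~\ref{dual}. You add detail the paper omits: injectivity via the pairing with $\mk(\ml_{\nu})$, and the $\zeta\mapsto\zeta^{-1}$ normalization forced by $\hat{\phi}$ involving $\phi^{-1}$. Conversely, for comultiplication the paper makes your K\"unneth/intertwining step explicit by citing \cite[Lemma 12.3.3]{lusztig2010introduction}, which writes $\Res(L_{\boldsymbol{\nu}},\phi_0)$ as a sum of shifted $(L_{\boldsymbol{\nu'}},\phi_0)\boxtimes(L_{\boldsymbol{\nu''}},\phi_0)$ plus a traceless term.
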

\begin{proof}
   Let $\boldsymbol{\nu}=\boldsymbol{\nu'\nu''}.$
   And let the morphism from  $\mk(\widetilde{\mq_{\bfV}})$ to $\mathcal{K}(\mathcal{P}_{\nu})$ map $[(B,\phi)]$ to $[(\Ext^{\bullet}(B,L),\hat{\phi})].$ It is well defined since it maps traceless elements to traceless elements. And considering the basis, this map is an isomorphism. Since $\oplus_{\bfV}\mk(\widetilde{\mq_{\bfV}})$ is spanned by $[L_{\boldsymbol{\nu}},\phi_0],$ we then prove the isomorphism is a bialgebraic isomorphism on $[L_{\boldsymbol{\nu}},\phi_0].$
   
   The isomorphism
    $\ind_{\nu',\nu''}^{\nu}(\Ext^{\bullet}(L_{\boldsymbol{\nu'}},\oplus_{\boldsymbol{\omega'}\in \mathcal{S}_{\nu'}}L_{\boldsymbol{\omega'}})\otimes \Ext^{\bullet}(L_{\boldsymbol{\nu''}},\oplus_{\boldsymbol{\omega''}\in \mathcal{S}_{\nu''}}L_{\boldsymbol{\omega''}}))\cong \Ext^{\bullet}(L_{\boldsymbol{\nu}},\oplus_{\boldsymbol{\omega}\in \mathcal{S}_{\nu}}L_{\boldsymbol{\omega}})$ is induced by $\Ind_{\nu',\nu''}^{\nu}$, as in \cite[Remark 4.8]{VaragnoloVasserot+2011+67+100}. Thus, by equation~\ref{eq11},
    $$\ind_{\nu',\nu''}^{\nu}((\Ext^{\bullet}(L_{\boldsymbol{\nu'}},\oplus_{\boldsymbol{\omega'}\in \mathcal{S}_{\nu'}}L_{\boldsymbol{\omega'}}),\hat{\phi_0})\otimes \Ext^{\bullet}((L_{\boldsymbol{\nu''}},\oplus_{\boldsymbol{\omega''}\in \mathcal{S}_{\nu''}}L_{\boldsymbol{\omega''}},\hat{\phi_0})))\cong (\Ext^{\bullet}(L_{\boldsymbol{\nu}},\oplus_{\boldsymbol{\omega}\in \mathcal{S}_{\nu}}L_{\boldsymbol{\omega}}),\hat{\phi_0}).$$

    By the definition of $\res_{\nu',\nu''}^{\nu},$ $$\res_{\nu',\nu''}^{\nu}(\Ext^{\bullet}(L_{\boldsymbol{\nu}},\oplus_{\boldsymbol{\omega}\in \mathcal{S}_{\nu}}L_{\boldsymbol{\omega}}))=\Ext^{\bullet}(\Res_{\nu',\nu''}^{\nu}(L_{\boldsymbol{\nu}}),(\oplus_{\boldsymbol{\omega'}\in \mathcal{S}_{\nu'}}L_{\boldsymbol{\omega'}})\boxtimes(\oplus_{\boldsymbol{\omega''}\in \mathcal{S}_{\nu''}}L_{\boldsymbol{\omega''}})).$$ As in \cite[Lemma 12.3.3]{lusztig2010introduction}, $\Res_{\nu',\nu''}^{\nu}(L_{\boldsymbol{\nu}},\phi_0)=\sum_{\boldsymbol{\nu'},\boldsymbol{\nu''}}(L_{\boldsymbol{\nu'}},\phi_0)\boxtimes (L_{\boldsymbol{\nu''}},\phi_0)[M(\boldsymbol{\nu'},\boldsymbol{\nu''})]\oplus T,$ where $T$ is a traceless element and $M(\boldsymbol{\nu'},\boldsymbol{\nu''})$ is a constant number depends on $\boldsymbol{\nu'},\boldsymbol{\nu''}.$ Then \begin{align*}
        [&\res_{\nu',\nu''}^{\nu}(\Ext^{\bullet}(L_{\boldsymbol{\nu}},\oplus_{\boldsymbol{\omega}\in \mathcal{S}_{\nu}}L_{\boldsymbol{\omega}}),\hat{\phi_0})]=\\&\sum_{\boldsymbol{\nu'},\boldsymbol{\nu''}}t^{M(\boldsymbol{\nu'},\boldsymbol{\nu''})}[(\Ext^{\bullet}(L_{\boldsymbol{\nu'}},\oplus_{\boldsymbol{\omega'}\in \mathcal{S}_{\nu'}}L_{\boldsymbol{\omega'}}),\hat{\phi_0})\otimes (\Ext^{\bullet}(L_{\boldsymbol{\nu''}},\oplus_{\boldsymbol{\omega''}\in \mathcal{S}_{\nu''}}L_{\boldsymbol{\omega''}}),\hat{\phi_0})].
    \end{align*}  Thus the morphism is a bialgebraic morphism.
    The assertion of basis now follows from Corollary~\ref{apro} and Proposition~\ref{dual}.
\end{proof}
This theorem gives the main result in \cite{https://doi.org/10.1112/jlms.12218}. 

We now define a bilinear pairing on $\mathcal{K}(\mathcal{P}_{\nu}) \times \mathcal{K}(\mathcal{L}_{\nu})$.

\begin{definition}
For $R(\nu)$-modules $M$ and $N$, define
\[
\dim \mathrm{Hom}_{R(\nu)\text{-}\mathrm{mod}}(M,N) = \sum_{i\in \mathbb{Z}} \dim \mathrm{Hom}_{R(\nu)\text{-}\mathrm{gmod}}(M,N)_i t^i,
\]
where $\mathrm{Hom}_{R(\nu)\text{-}\mathrm{gmod}}(M,N)_i$ denotes the degree-zero maps in $\mathrm{Hom}_{R(\nu)\text{-}\mathrm{mod}}(M,N\langle i\rangle)$.

Given $(M,\phi)$ and $(N,\psi)$, the maps $\phi$ and $\psi$ induce a map
\[
a_{\phi,\psi} : \mathrm{Hom}_{R(\nu)\text{-}\mathrm{mod}}(M,N)_i \to \mathrm{Hom}_{R(\nu)\text{-}\mathrm{mod}}(M,N)_i
\]
given by
\[
a_{\phi,\psi}(f) = (\mathbf{a}^*)^{-1}(\psi^{-1} \circ f \circ \phi).
\]
The pairing on $\mk(\mP_{\nu})\times \mk(\ml_{\nu})$ is defined as 
\[
\langle [(M,\phi)], [(N,\psi)] \rangle_{\nu} = \sum_{i,j \in \mathbb{Z}} (-1)^j \mathrm{tr}\bigl(a_{\phi,\psi[j]}, \mathrm{Hom}_{\mathcal{D}^{b}(R(\nu)\text{-}\mathrm{mod})}(M,N[j])_i\bigr) t^i.
\]
\end{definition}

The pairing is well-defined. Relations $(1)$, $(3)$, and $(4)$ are immediate. For relation $(2)$, any short exact sequence
\[
0 \to (M,\phi_M) \to (N,\phi_N) \to (K,\phi_K) \to 0
\]
induces a long exact sequence after applying $\mathrm{Hom}$; hence the corresponding relation in the Grothendieck group lies in the radical of the pairing. Moreover, this pairing is a $\mathbb{Z}$-bilinear form and satisfies
\[
\langle t^i \zeta^j [(M,\phi)], t^{i'} \zeta^{j'} [(N,\psi)] \rangle_{\nu} = t^{i - i'} \zeta^{j - j'} \langle [(M,\phi)], [(N,\psi)] \rangle_{\nu}.
\]

By Corollary~\ref{apro} and Proposition~\ref{dual}, it follows that all non-traceless, self-dual, indecomposable projective $R(\nu)\langle\mathbf{a}\rangle$-modules are of the form
\[
(\mathrm{Ext}^{\bullet}(P, L), \hat{\phi_P})
\]
where $(P,\phi_P)$ is a simple perverse sheaf that is invariant under Verdier duality. We denote such modules by $(\mathbf{P}, \hat{\phi_P})$.

Moreover, the top of every self-dual indecomposable projective module is a self-dual simple module, and it suffices to consider the non-traceless part. The simple module corresponding to $(\mathbf{P}_P,\hat{\phi_P})$ is denoted by $(\mathbf{L}_P,\phi_{L_P})$.

It is clear that the classes $[(\mathbf{P}_P,\hat{\phi_P})]$ form a basis of $\mathcal{K}(\mathcal{P}_{\nu})$, and the classes $[(\mathbf{L}_P,\phi_{L_P})]$ form a basis of $\mathcal{K}(\mathcal{L}_{\nu})$. Under the pairing, these two bases satisfy
\[
\langle [(\mathbf{P}_P,\hat{\phi_P})], [(\mathbf{L}_{P'},\phi_{L_{P'}})] \rangle_{\nu} = \delta_{P,P'}.
\]

\subsection{Dual Standard Modules and Standard Modules in the Symmetrizable Case}\label{dual standard}
In this subsection, we restrict our attention to the finite type case, where the Ext-algebra arises from a finite type quiver.

In this setting, Kato proved that $R(\nu)$ has finite global dimension~\cite{cd088214-009e-3813-9a86-97f9a299a24c}. Combining this with \cite[Lemma 3.1]{REITEN1985224}, we obtain the following proposition.
\begin{proposition}
The global dimension of $R(\nu)\langle\mathbf{a}\rangle$ is finite.
\end{proposition}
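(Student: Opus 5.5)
The plan is to deduce finiteness of the global dimension of $R(\nu)\langle\mathbf{a}\rangle$ from that of $R(\nu)$ via the standard comparison for skew group algebras of finite groups whose order is invertible in the ground field. Here the group is $\langle\mathbf{a}\rangle\cong\bbZ/n\bbZ$, and the coefficient field $\overline{\bbq_l}$ has characteristic zero, so $n$ is invertible. Kato's theorem~\cite{cd088214-009e-3813-9a86-97f9a299a24c} gives $\mathrm{gl.dim}\, R(\nu)<\infty$ in finite type, and \cite[Lemma 3.1]{REITEN1985224} asserts $\mathrm{gl.dim}\, R(\nu)\langle\mathbf{a}\rangle=\mathrm{gl.dim}\, R(\nu)$ under this invertibility hypothesis. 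So the proof reduces to checking that the hypotheses of that lemma are met. In the graded setting, I would also check that the relevant dimension is the graded one, and that graded and ungraded global dimensions agree for such positively graded algebras.

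I would nonetheless sketch the mechanism so the argument is self-contained. Let $M$ be an $R(\nu)\langle\mathbf{a}\rangle$-module, which by the Proposition above is the same as a pair $(M,\psi)$.
\begin{enumerate}
\item Take a projective resolution $P_\bullet\to M$ over $R(\nu)$ of length at most $d=\mathrm{gl.dim}\, R(\nu)$.
\item Apply the induction $R(\nu)\langle\mathbf{a}\rangle\otimes_{R(\nu)}-$. This functor is exact, because $R(\nu)\langle\mathbf{a}\rangle=\bigoplus_{j=0}^{n-1}R(\nu)\mathbf{a}^j$ is free as a right $R(\nu)$-module. It sends projectives to projectives, so it yields a projective resolution of $R(\nu)\langle\mathbf{a}\rangle\otimes_{R(\nu)}M$ of length at most $d$.
\item Show that $M$ is a direct summand of $R(\nu)\langle\mathbf{a}\rangle\otimes_{R(\nu)}M$. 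The multiplication map $\mu\colon x\otimes m\mapsto xm$ is split by the averaging map
\[
s(m)=\frac{1}{n}\sum_{j=0}^{n-1}\mathbf{a}^{j}\otimes\mathbf{a}^{-j}m .
\]
One checks that $s$ is $R(\nu)\langle\mathbf{a}\rangle$-linear using the relation $\mathbf{a}r=\mathbf{a}(r)\mathbf{a}$, and that $\mu\circ s=\mathrm{id}$.
\end{enumerate}
Hence $\mathrm{pd}\, M\le d$, and therefore $\mathrm{gl.dim}\, R(\nu)\langle\mathbf{a}\rangle\le d$.

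The only real obstacle is the verification that $s$ is a module map compatible with the grading. This is where both invertibility of $n$ and $\mathbf{a}$ being a graded automorphism of order $n$ are used. We have $\mathbf{a}^n=\mathrm{id}$, and $\mathbf{a}$ preserves degree since $\phi_0$ has degree zero, so $s$ is homogeneous of degree $0$. With this in place, the finiteness follows directly from Kato's result.
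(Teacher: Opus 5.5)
Your proposal is correct and is essentially the paper's argument: $R(\nu)\langle\mathbf{a}\rangle\otimes_{R(\nu)}-$ is exact and preserves projectives, and the averaging section $m\mapsto\frac{1}{n}\sum_{j}\mathbf{a}^{j}\otimes\mathbf{a}^{-j}m$ of the multiplication map (the paper's $\tilde{i_N}$, reindexed) makes every $R(\nu)\langle\mathbf{a}\rangle$-module a summand of an induced one; combined with Kato's finiteness of $\mathrm{gl.dim}\,R(\nu)$, this gives the result. You prove only $\mathrm{gl.dim}\,R(\nu)\langle\mathbf{a}\rangle\le\mathrm{gl.dim}\,R(\nu)$, which is all the statement needs, whereas the paper also records the reverse inequality via the $R(\nu)$-linear splitting $p_M\circ i_M=\mathrm{id}_M$.
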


\begin{proof}
There is a natural embedding $R(\nu) \hookrightarrow R(\nu)\langle\mathbf{a}\rangle$. For an $R(\nu)$-module $M$, consider the map
\[
i_M : M \to R(\nu)\langle\mathbf{a}\rangle \otimes_{R(\nu)} M, \quad i_M(m) = 1 \otimes m, \quad \forall m \in M.
\]
Define
\[
p_M : R(\nu)\langle\mathbf{a}\rangle \otimes_{R(\nu)} M \to M
\]
by
\[
p_M(\mathbf{a}^k \otimes m) = 0 \quad \text{for } k \ne 0, \quad p_M(1 \otimes m) = m.
\]
Clearly, $p_M \circ i_M = \mathrm{id}_M$. For an $R(\nu)\langle\mathbf{a}\rangle$-module $N$, define
\[
q_N : R(\nu)\langle\mathbf{a}\rangle \otimes_{R(\nu)} N \to N
\]
by $q_N(r \otimes x) = r x$ for all $r \in R(\nu)\langle\mathbf{a}\rangle$ and $x \in N$.

Now define
\[
\tilde{i_N} : N \to R(\nu)\langle\mathbf{a}\rangle \otimes_{R(\nu)} N
\]
by
\[
\tilde{i_N}(x) = \frac{1}{n} \sum_{k=0}^{n-1} \mathbf{a}^{-k} i_N(\mathbf{a}^k x).
\]
One can check that $\tilde{i_N}$ is an $R(\nu)\langle\mathbf{a}\rangle$-module homomorphism, and satisfies $q_N \circ \tilde{i_N} = \mathrm{id}_N$.

Therefore, for any $R(\nu)$-module $M$, we have:
\[
\mathrm{proj.dim}_{R(\nu)} M \geq \mathrm{proj.dim}_{R(\nu)\langle\mathbf{a}\rangle}(R(\nu)\langle\mathbf{a}\rangle \otimes_{R(\nu)} M)
\geq \mathrm{proj.dim}_{R(\nu)}(R(\nu)\langle\mathbf{a}\rangle \otimes_{R(\nu)} M) \geq \mathrm{proj.dim}_{R(\nu)} M.
\]

For any $R(\nu)\langle\mathbf{a}\rangle$-module $N$, since $N$ is a direct summand of $R(\nu)\langle\mathbf{a}\rangle \otimes_{R(\nu)} N$, it follows that
\[
\mathrm{proj.dim}_{R(\nu)\langle\mathbf{a}\rangle} N \leq \mathrm{proj.dim}_{R(\nu)\langle\mathbf{a}\rangle}(R(\nu)\langle\mathbf{a}\rangle \otimes_{R(\nu)} N).
\]
Hence,
\[
\mathrm{proj.dim}(R(\nu)) = \mathrm{proj.dim}(R(\nu)\langle\mathbf{a}\rangle).
\]
The proof is complete.
\end{proof}

Next, we construct the dual standard module $\tilde{K}_{\lambda}$ and the map $\mathbf{a}^* \tilde{K}_{\lambda} \to \tilde{K}_{\lambda}$. We realize both through the Ext-algebra in the derived category of sheaves. Let $\Lambda$ denote the set of $\mathbf{G}_{\mathbf{V}}$-orbits in $\mathbf{E}_{\mathbf{V}}$, and let $\lambda \in \Lambda$. Let $\mathcal{O}_{\lambda}$ denote the orbit corresponding to $\lambda$. Define the partial order on $\Lambda$ by $\mu \leq \lambda$ if $\mathcal{O}_{\mu} \subset \overline{\mathcal{O}_{\lambda}}$. Let
\[
j_{\lambda} : \mathcal{O}_{\lambda} \hookrightarrow \mathbf{E}_{\mathbf{V},\Omega}
\]
denote the inclusion.

Since we are in the finite type case, the space $\mathbf{E}_{\mathbf{V},\Omega}$ is a linear space defined over $\overline{\mathbb{F}_{q}}$, and $(\mathbf{E}_{\mathbf{V},\Omega})_0$ is its $\bbF_q$-form, so that $\mathbf{E}_{\mathbf{V},\Omega}\cong \operatorname{spec}(\overline{\mathbb{F}_{q}})\times_{\operatorname{spec}(\bbF_q)}(\mathbf{E}_{\mathbf{V},\Omega})_0$. Let $\gamma:\operatorname{spec}(\overline{\mathbb{F}_{q}})\times_{\operatorname{spec}(\bbF_q)}(\mathbf{E}_{\mathbf{V},\Omega})_0\rightarrow (\mathbf{E}_{\mathbf{V},\Omega})_0$ be the map induced by $\operatorname{spec}(\overline{\mathbb{F}_{q}})\rightarrow \operatorname{spec}(\bbF_q)$. We denote $\gamma^*$ by $\operatorname{egf}$.

By Lusztig's construction, the $\mathbf{G}_{\mathbf{V}}$-equivariant simple perverse sheaves without traceless summands are all of the form
\[
IC(\mathcal{O}_{\lambda}, \overline{\mathbb{Q}}_l)={j_{\lambda}}_{!*}(\overline{\mathbb{Q}}_l[\dim \mo_{\lambda}])
\]
for $\lambda$ invariant under the action of $a$. Moreover, $IC(\mathcal{O}_{\lambda}, \overline{\mathbb{Q}}_l)$ carries a naturally induced Weil structure
\[
(IC(\mathcal{O}_{\lambda}, \overline{\mathbb{Q}}_l), Fr)=(j_\lambda)_{!*}(\operatorname{egf}(\overline{\mathbb{Q}}_l[\dim \mo_{\lambda}](\frac{\dim\mo_{\lambda}}{2})|_{(\mo_{\lambda})_0})),
\]
together with
\[
\alpha_{\lambda} : a^* IC(\mathcal{O}_{\lambda}, \overline{\mathbb{Q}}_l) \to IC(\mathcal{O}_{\lambda}, \overline{\mathbb{Q}}_l),
\]
which is also a morphism of Weil structures by \cite{lusztig1998canonical}, satisfying $j_{\lambda}^*(\alpha_{\lambda})=\operatorname{id}$ and $\alpha_{\lambda}\circ a^*\alpha_{\lambda}\cdots (a^*)^{n-1}\alpha_{\lambda}=\operatorname{id}$. $(IC(\mathcal{O}_{\lambda}, \overline{\mathbb{Q}}_l),\alpha_{\lambda})$ is stable under Verdier duality.

Similarly, if we set $C_{\lambda}=\operatorname{egf}\bigl((j_{\lambda})_! \overline{\mathbb{Q}}_l[\dim \mo_{\lambda}](\frac{\dim\mo_{\lambda}}{2}) |_{(\mathcal{O}_{\lambda})_0}\bigr)$, the geometric realization of PBW basis in \cite{lan2025structurecoefficientsquantumgroups}, then we also obtain
\[
\psi_{\lambda} : a^* C_{\lambda} \to C_{\lambda},
\]
which is also a morphism of Weil structures satisfying $j_{\lambda}^*(\psi_{\lambda})$ is the canonical map and $\psi_{\lambda}\circ a^*\psi_{\lambda}\cdots (a^*)^{n-1}\psi_{\lambda}=\operatorname{id}$. 

Ignoring the Weil structure, for $IC(\mathcal{O}_{\lambda}, \overline{\mathbb{Q}}_l)$ supported on $\overline{\mathcal{O}_{\lambda}}$, let
\[
i_{\lambda} : \overline{\mathcal{O}_{\lambda}} - \mathcal{O}_{\lambda} \hookrightarrow \mathbf{E}_{\mathbf{V},\Omega}
\]
denote the inclusion. Then we have the following distinguished triangle:
\begin{equation}
\begin{tikzcd}
a^*C_{\lambda} \arrow[d, "\psi_{\lambda}"] \arrow[r] & {a^*IC(\mo_{\lambda},\overline{\bbQ_{l}})} \arrow[d, "\alpha_{\lambda}"] \arrow[r] & {a^*(i_{\lambda})_!(i_{\lambda})^*IC(\mo_{\lambda},\overline{\bbQ_{l}})} \arrow[d, "(i_{\lambda})_!(i_{\lambda})^*\alpha_{\lambda}"] \arrow[r, "+1"] & {} \\
C_{\lambda} \arrow[r]                                & {IC(\mo_{\lambda},\overline{\bbQ_{l}})} \arrow[r]                                  & {(i_{\lambda})_!(i_{\lambda})^*IC(\mo_{\lambda},\overline{\bbQ_{l}})} \arrow[r, "+1"]                                                                & {}
\end{tikzcd}
\end{equation}
We now consider the maps between $R(\nu)$-modules induced by $\mathrm{Ext}^{\bullet}(-, L)$. Since $\alpha_{\lambda}$, $\psi_{\lambda}$, and $(i_{\lambda})_! (i_{\lambda})^* \alpha_{\lambda}$ are morphisms of distinguished triangles, the induced maps $\hat{\alpha_{\lambda}}$, $\hat{\psi_{\lambda}}$, and $\widehat{(i_{\lambda})_! (i_{\lambda})^* \alpha_{\lambda}}$ yield morphisms between the corresponding short exact sequences:
\[
\begin{tikzcd}
a^* \mathrm{Ext}^{\bullet}(C_{\lambda}, L) \arrow[d, "\hat{\psi_{\lambda}}"] & 
a^* \mathrm{Ext}^{\bullet}(IC(\mathcal{O}_{\lambda}, \overline{\mathbb{Q}}_l), L) \arrow[d, "\hat{\alpha_{\lambda}}"] \arrow[l, "a^*f"] & 
a^* \mathrm{Ext}^{\bullet}((i_{\lambda})_!(i_{\lambda})^* IC(\mathcal{O}_{\lambda}, \overline{\mathbb{Q}}_l), L) \arrow[d, "\widehat{(i_{\lambda})_!(i_{\lambda})^* \alpha_{\lambda}}"] \arrow[l] \\
\mathrm{Ext}^{\bullet}(C_{\lambda}, L) & 
\mathrm{Ext}^{\bullet}(IC(\mathcal{O}_{\lambda}, \overline{\mathbb{Q}}_l), L) \arrow[l, "f"] & 
\mathrm{Ext}^{\bullet}((i_{\lambda})_!(i_{\lambda})^* IC(\mathcal{O}_{\lambda}, \overline{\mathbb{Q}}_l), L) \arrow[l]
\end{tikzcd}
\]

By \cite[Theorem 2.7]{cd088214-009e-3813-9a86-97f9a299a24c}, the dual standard module of $R(\nu)$ is
\[
\widetilde{K}_{\lambda} = \mathrm{Ext}^{\bullet}(C_{\lambda}, L),
\]
and $f$ is surjective. Therefore,
\[
f : (\mathrm{Ext}^{\bullet}(IC(\mathcal{O}_{\lambda}, \overline{\mathbb{Q}}_l), L), \hat{\alpha_{\lambda}}) \to (\mathrm{Ext}^{\bullet}(C_{\lambda}, L), \hat{\psi_{\lambda}})
\]
is a surjection of $R(\nu)\langle\mathbf{a}\rangle$-modules.
\begin{definition}
   The dual standard module of $R(\nu)\langle\mathbf{a}\rangle$ is defined as 
\[
(\widetilde{K}_{\lambda}, \gamma_{\lambda}) = \bigl(\mathrm{Ext}^{\bullet}(C_{\lambda}, L), \hat{\psi}_{\lambda}\bigr).
\]
\end{definition}
These dual standard modules correspond to the PBW basis.

Since $R(\nu)\langle\mathbf{a}\rangle$ has finite global dimension, we have $(\widetilde{K}_{\lambda}, \gamma_{\lambda}) \in \mathcal{P}_{\nu}$. Let
\[
(\mathbf{P}_{\lambda}, \hat{\phi_{\lambda}}) = (\mathrm{Ext}^{\bullet}(IC(\mathcal{O}_{\lambda}, \overline{\mathbb{Q}}_l), L), \hat{\alpha_{\lambda}}),
\]
and denote the corresponding simple module by $(\mathbf{L}_{\lambda}, \theta_{\lambda})$.

For the standard module of $R(\nu)$,
\[
K_{\lambda} = \mathbf{P}_{\lambda} \big/ \sum_{f \in \mathrm{Hom}_{R(\nu)\text{-}\mathrm{mod}}(\mathbf{P}_{\mu}, \mathbf{P}_{\lambda})_{>0}, \mu \leq \lambda} \mathrm{Im} f,
\]
observe that $\hat{\phi_{\lambda}}$ has degree zero and satisfies the following property: for any
\[
f \in \mathrm{Hom}_{R(\nu)\text{-}\mathrm{mod}}(\mathbf{P}, \mathbf{P}_{\lambda}),
\]
where $\mathbf{P}$ is either an indecomposable projective $R(\nu)$-module (corresponding to $\hat{\phi}$) or $\bigoplus_k (a^*)^k \mathbf{P}'$ (corresponding to $\hat{\phi_{\oplus}}$), we have
\[
\hat{\phi_{\lambda}}(\mathrm{Im} \mathbf{a}^*f)=\mathrm{Im}(\hat{\phi_{\lambda}}\circ \mathbf{a}^*f),f \in \mathrm{Hom}_{R(\nu)\text{-}\mathrm{mod}}(\mathbf{P}_{\mu}, \mathbf{P}_{\lambda})_{>0}, \mu \leq \lambda
\]
Since $\hat{\phi_{\lambda}}\circ \mathbf{a}^*f\in\ \mathrm{Hom}_{R(\nu)\text{-}\mathrm{mod}}(\mathbf{P}_{\mu}, \mathbf{P}_{\lambda})_{>0}$ for $\mu \leq \lambda$, this image is contained in
\[
\sum_{f \in \mathrm{Hom}_{R(\nu)\text{-}\mathrm{mod}}(\mathbf{P}_{\mu}, \mathbf{P}_{\lambda})_{>0}, \mu \leq \lambda} \mathrm{Im} f.
\]
Hence $\hat{\phi_{\lambda}}$ descends to a map on $K_{\lambda}$, which we denote by $\beta_{\lambda}$; the resulting object is the standard module, written $(K_{\lambda}, \beta_{\lambda}).$ 

Following Kato's method in \cite{cd088214-009e-3813-9a86-97f9a299a24c}, one can explicitly express $(\widetilde{K}_{\lambda}, \gamma_{\lambda})$ in terms of a projective resolution by $R(\nu)\langle\mathbf{a}\rangle$-modules consisting of projective and traceless summands. Denote by $\widetilde{\mathcal{D}^{b}_{m,(\mathbf{G}_{\mathbf{V}})_0}((\mathbf{E}_{\mathbf{V}})_0)}$ the bounded $(\mathbf{G}_{\mathbf{V}})_0$-equivariant mixed derived category on $(\mathbf{E}_{\mathbf{V}})_0$.

In Lemma \ref{leqq} and Corollary \ref{corll}, we assume that $(K,\phi)$ satisfies the following condition: for any $n$, ${}^p\tau^{\geq n}(K,\phi)$ is isomorphic to the mapping cone of ${}^p\tau^{\geq n+1}(K,\phi)[-1]$ and ${}^p\mathrm{H}^n((K,\phi))[-n]$.
\begin{lemma}\label{leqq}
Let $(\mathbf{E}_{\mathbf{V}})_0$ denote the $\mathbb{F}_q$-rational points of $\mathbf{E}_{\mathbf{V}}$. Let $(K,\phi) \in \widetilde{\mathcal{D}^{b}_{m,(\mathbf{G}_{\mathbf{V}})_0}((\mathbf{E}_{\mathbf{V}})_0)}$. For any $l \in \mathbb{Z}$, there exist $(F_{<l}K,\phi_{<l})$ and $(F_{\geq l}K,\phi_{\geq l})$ such that $F_{<l}K$ has weights $< l$ and $F_{\geq l}K$ has weights $\geq l$. Moreover, there exists a distinguished triangle:
\[
\operatorname{egf}(F_{<l}K,\phi_{<l}) \xrightarrow{\operatorname{egf}(f)} \operatorname{egf}(K,\phi) \xrightarrow{\operatorname{egf}(g)} \operatorname{egf}(F_{\geq l}K,\phi_{\geq l}) \xrightarrow{+1},
\]
meaning that $\operatorname{egf}(\phi_{<l})$, $\operatorname{egf}(\phi)$, and $\operatorname{egf}(\phi_{\geq l})$ are the morphisms between the triangle. Furthermore, there exists a distinguished triangle:
\[
F_{<l}K \xrightarrow{f} K \xrightarrow{g} F_{\geq l}K \xrightarrow{+1}.
\]
\end{lemma}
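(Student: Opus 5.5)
We will construct the weight truncation by induction on the perverse amplitude of $K$, using the standing assumption that ${}^p\tau^{\geq n}(K,\phi)$ is the cone of ${}^p\tau^{\geq n+1}(K,\phi)[-1]\to {}^p\mathrm{H}^n((K,\phi))[-n]$. The base case is a single mixed perverse sheaf $(M,\phi)$ on $(\mathbf{E}_{\mathbf{V}})_0$ with an $a$-structure. Here we use the weight filtration of mixed perverse sheaves \cite[Th\'eor\`eme 5.3.5]{BBD}: there is a unique increasing filtration $W_{\bullet}M$ by mixed perverse subsheaves with $\mathrm{Gr}^W_k M$ pure of weight $k$. We set $F_{<l}M=W_{l-1}M$ and $F_{\geq l}M=M/W_{l-1}M$. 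Because $a$ is defined over $\mathbb{F}_q$, $a^*$ preserves weights, so $a^*W_{l-1}M=W_{l-1}a^*M$. Uniqueness and functoriality of the weight filtration then show that $\phi$ restricts to $\phi_{<l}$ and descends to $\phi_{\geq l}$. Moreover, the cocycle identity $\phi\circ a^*\phi\circ\cdots\circ(a^*)^{n-1}\phi=\mathrm{id}$ passes to both. The short exact sequence of perverse sheaves gives the distinguished triangle $F_{<l}M\to M\to F_{\geq l}M\xrightarrow{+1}$, and the vertical maps $a^*(-)\to(-)$ form a morphism of triangles.

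For the inductive step, write $(K,\phi)$ as an extension of ${}^p\tau^{\geq n+1}(K,\phi)$ by ${}^p\mathrm{H}^n(K,\phi)[-n]$, with the connecting map $\delta$ compatible with $\phi$ by the hypothesis. The pieces $F_{<l}$ and $F_{\geq l}$ are already constructed for both terms; for the shifted perverse sheaf we use $F_{<l-n}$ so that the weights match after the shift $[-n]$. We then use the weight orthogonality $\Hom(A,B)=0$ for $A$ of weights $<l$ and $B$ of weights $\geq l$, and its shifted analogue for $\Hom(A,B[1])$, which holds by \cite[5.1.15]{BBD} in the mixed setting over $\mathbb{F}_q$. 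By this orthogonality, $\delta$ decomposes uniquely into maps $F_{<l}\to F_{<l}[1]$ and $F_{\geq l}\to F_{\geq l}[1]$. This is the standard fact that the pair (weights $<l$, weights $\geq l$) forms a t-structure-like (``weight'') truncation on the mixed category. Taking cones produces $F_{<l}K$ and $F_{\geq l}K$ together with the triangle $F_{<l}K\to K\to F_{\geq l}K\xrightarrow{+1}$.

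The main obstacle is that cones are not functorial, so the maps $\phi_{<l}$ and $\phi_{\geq l}$ on the cones are not automatically well-defined and need not satisfy the cocycle identity. We resolve this with the same orthogonality. First, $\Hom(F_{<l}K, F_{\geq l}K[-1])=0$, which holds because weights are preserved by $[-1]$ up to a shift in the favorable direction. This vanishing forces the maps $a^*F_{<l}K\to F_{<l}K$ and $a^*F_{\geq l}K\to F_{\geq l}K$ induced by $\phi$ to be unique. Second, by that uniqueness, the cocycle identity for $\phi$ transfers to $\phi_{<l}$ and $\phi_{\geq l}$, since both sides are morphisms lifting the identity. Applying $\operatorname{egf}=\gamma^*$, which is exact and commutes with $a^*$, yields the first triangle with vertical maps $\operatorname{egf}(\phi_{<l})$, $\operatorname{egf}(\phi)$, $\operatorname{egf}(\phi_{\geq l})$. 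Finally, forgetting the $a$-structure gives the last triangle.
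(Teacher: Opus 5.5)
There is a genuine gap. It sits in the orthogonality you invoke to build the cones and, more importantly, to make $\phi_{<l}$ and $\phi_{\geq l}$ unique.

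Your inductive step uses the shift-sensitive (BBD) notion of weight; that is why you re-index the truncation of ${}^p\mathrm{H}^n(K)[-n]$. Incidentally, $M[-n]$ has weights lowered by $n$, so the index should be $l+n$, not $l-n$. In this convention, the claim $\Hom(A,B)=0$ for $A$ of weights $<l$ and $B$ of weights $\geq l$ is false in the mixed category. Take $P$ a pure perverse sheaf of weight $l-1$ with $\operatorname{End}(\operatorname{egf}P)=\overline{\mathbb{Q}}_l$, so that $P[1]$ has weight $l$. The exact sequence
\[
0\to \underline{\Hom}^{k-1}(P,Q)_{\operatorname{Fr}}\to \Hom(P,Q[k])\to \underline{\Hom}^{k}(P,Q)^{\operatorname{Fr}}\to 0,
\]
with $Q=P$ and $k=1$, shows that $\Hom(P,P[1])$ contains $\operatorname{End}(\operatorname{egf}P)_{\operatorname{Fr}}\neq 0$. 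The estimates of \cite[5.1.15]{BBD} give vanishing only in positive degrees, or of Frobenius invariants over $\overline{\mathbb{F}_q}$, not of $\Hom^0$.

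Your second vanishing also fails. The shift $[-1]$ lowers weights, so $F_{\geq l}K[-1]$ has weights $\geq l-1$, which is the unfavorable direction. Hence $\Hom(F_{<l}K,F_{\geq l}K[-1])=0$ fails already for $K=P\oplus P[1]$, where the identity of $P$ is a nonzero map. So the BBD-weight truncation is not a t-structure. As a result, neither the uniqueness of the maps induced by $\phi$ on the cones nor the transfer of the cocycle identity to $\phi_{<l}$ and $\phi_{\geq l}$ is established.

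The paper's proof is built to avoid exactly this. To construct the $3\times3$ diagram it uses only $\Hom(K^{<l}_{>m}[-1],\mathrm{H}^m_{\geq l})=0$; the extra $[-1]$ widens the weight gap enough to kill $\Hom^0$. For the $a$-structures it uses only that $\mathrm{id}-\operatorname{Fr}$ is invertible on the geometric $\underline{\Hom}$ groups, i.e.\ that the $\operatorname{egf}$-images of the obstructing Hom groups vanish. That is why the lemma asserts compatibility of $\phi_{<l},\phi,\phi_{\geq l}$ only after applying $\operatorname{egf}$.

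If you want to keep your functoriality argument, switch to Morel's convention: $K$ has weights $<l$ (resp.\ $\geq l$) if every ${}^p\mathrm{H}^iK$ does, with no shift.
\begin{itemize}
\item For perverse $P$ of weights $<l$ and $Q$ of weights $\geq l$, $\underline{\Hom}^j(P,Q)$ vanishes for $j<0$ and has weights $>j$ for $j\geq 0$. So $\operatorname{Fr}$ never has eigenvalue $1$, and the sequence above gives $\Hom(P,Q[k])=0$ for all $k$.
\item By d\'evissage the two classes then form a t-structure on the mixed category. Your uniqueness argument and the cocycle transfer go through, and you even get compatibility before applying $\operatorname{egf}$.
\item In this convention ${}^p\mathrm{H}^n(K)[-n]$ is truncated at index $l$ itself, not at a shifted index.
\item $\delta$ only preserves the two-step filtrations; its component $F_{\geq l}\to F_{<l}[1]$ need not vanish. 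The cones must therefore be assembled by the octahedral/$3\times3$ argument, as in the paper's Diagram 2.
\end{itemize}
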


\begin{proof}
We consider the perverse $t$-structure on $\mathcal{D}^b_{m,(\mathbf{G}_{\mathbf{V}})_0}((\mathbf{E}_{\mathbf{V}})_0)$. Since $(K,\phi) \in \widetilde{\mathcal{D}^{b}_{m,(\mathbf{G}_{\mathbf{V}})_0}((\mathbf{E}_{\mathbf{V}})_0)}$, we have $\tau_{>m}K = 0$ for $m \gg 0$. Hence, we may prove the lemma by descending induction on $m$.

Assume that for $(\tau_{>m}K,\tau_{>m}\phi)$ there exist $(K_{>m}^{<l},\phi^{>m}_{<l}) := (F_{<l} \tau_{>m}K, \phi^{>m}_{<l})$ and $(K_{>m}^{\geq l},\phi^{>m}_{\geq l}) := (F_{\geq l} \tau_{>m}K, \phi^{>m}_{\geq l})$ such that the weights satisfy the requirements, and
\[
\operatorname{egf}(K_{>m}^{<l},\phi^{>m}_{<l}) \xrightarrow{\operatorname{egf}(f_m)} \operatorname{egf}(\tau_{>m}K,\tau_{>m}\phi) \xrightarrow{\operatorname{egf}(g_m)} \operatorname{egf}(K_{>m}^{\geq l},\phi^{>m}_{\geq l}) \xrightarrow{+1}
\]
is a distinguished triangle. Also,
\[
K_{>m}^{<l} \xrightarrow{f_m} \tau_{>m}K \xrightarrow{g_m} K_{>m}^{\geq l} \xrightarrow{+1}
\]
is a distinguished triangle.

This is clear for sufficiently large $m$, since then $\tau_{>m}K = 0$. By \cite[Theorem 5.4.16]{Pramod-2021}, $^{p}\mathrm{H}^m(K)[-m]$ admits a weight filtration with a subobject $^{p}\mathrm{H}^m_{<l}(K)[-m]$ of weight $< l$ and a quotient $\mathrm{H}^m_{\geq l} = (^{p}\mathrm{H}^m(K)/^{p}\mathrm{H}^m_{<l}(K))[-m]$ of weight $\geq l$. The map $^{p}\mathrm{H}^m(\phi)[-m]$ induces morphisms $\phi^m_{<l}$ and $\phi^m_{\geq l}$ on these subquotients.

We have the following diagram:
\[
\begin{tikzcd}
K_{>m}^{<l}[-1] \arrow[r, "f_m"] & \tau_{>m}K[-1] \arrow[r, "g_m"] \arrow[d] & K_{>m}^{\geq l}[-1] \\
^{p}\mathrm{H}^m_{<l}(K)[-m] \arrow[r] & ^{p}\mathrm{H}^m(K)[-m] \arrow[d] \arrow[r] & \mathrm{H}^m_{\geq l} \\
& \tau_{\geq m}K &
\end{tikzcd}
\]
\emph{(Diagram 1: Construction of the weight decomposition triangle)}

By \cite[Lemma 5.4.14(3)]{Pramod-2021}, we have:
\[
\mathrm{Hom}_{\mathcal{D}^b_{m,(\mathbf{G}_{\mathbf{V}})_0}((\mathbf{E}_{\mathbf{V}})_0)}(K_{>m}^{<l}[-1],\mathrm{H}^m_{\geq l})=0.
\]
Hence there is a commutative diagram in which each row and column is a distinguished triangle; it is obtained by taking the objects in the third row as mapping cones:
\[
\begin{tikzcd}
K_{>m}^{<l}[-1] \arrow[r, "f_m"] \arrow[d] & \tau_{>m}K[-1] \arrow[r, "g_m"] \arrow[d] & K_{>m}^{\geq l}[-1] \arrow[d] \\
^{p}\mathrm{H}^m_{<l}(K)[-m] \arrow[r] \arrow[d] & ^{p}\mathrm{H}^m(K)[-m] \arrow[d] \arrow[r] & \mathrm{H}^m_{\geq l} \arrow[d] \\
K_{>m-1}^{<l} \arrow[r, "f_{m-1}"] & \tau_{\geq m}K \arrow[r, "g_{m-1}"] & K_{>m-1}^{\geq l}
\end{tikzcd}
\]
\emph{(Diagram 2: Construction of the weight decomposition triangle II)}

By \cite[Lemma 5.4.14(1)]{Pramod-2021}, we have:
\[
\operatorname{egf}\left(\mathrm{Hom}_{\mathcal{D}^b_{m,(\mathbf{G}_{\mathbf{V}})_0}((\mathbf{E}_{\mathbf{V}})_0)}(K_{>m}^{<l}[-1],\mathrm{H}^m_{\geq l}[-1])\right)=0
\]
and
\[
\operatorname{egf}\left(\mathrm{Hom}_{\mathcal{D}^b_{m,(\mathbf{G}_{\mathbf{V}})_0}((\mathbf{E}_{\mathbf{V}})_0)}(K_{>m}^{<l},\mathrm{H}^m_{\geq l})\right)=0.
\]
Under the notation of \cite[Remark 5.3.5]{Pramod-2021}, with $\operatorname{egf}(\Hom_{\mathcal{D}^b_{m,(\mathbf{G}_{\mathbf{V}})_0}((\mathbf{E}_{\mathbf{V}})_0)}(F,G))\cong \underline{\Hom}_{\mathcal{D}^b_{m,(\mathbf{G}_{\mathbf{V}})_0}((\mathbf{E}_{\mathbf{V}})_0)}(F,G)^{\operatorname{Fr}},$ $$\operatorname{id}-\operatorname{Fr:}\underline{\mathrm{Hom}}_{\mathcal{D}^b_{m,(\mathbf{G}_{\mathbf{V}})_0}((\mathbf{E}_{\mathbf{V}})_0)}(K_{>m}^{<l}[-1],\mathrm{H}^m_{\geq l})\rightarrow\underline{\mathrm{Hom}}_{\mathcal{D}^b_{m,(\mathbf{G}_{\mathbf{V}})_0}((\mathbf{E}_{\mathbf{V}})_0)}(K_{>m}^{<l}[-1],\mathrm{H}^m_{\geq l})$$ and $$\operatorname{id}-\operatorname{Fr:}\underline{\mathrm{Hom}}_{\mathcal{D}^b_{m,(\mathbf{G}_{\mathbf{V}})_0}((\mathbf{E}_{\mathbf{V}})_0)}(K_{>m}^{<l},\mathrm{H}^m_{\geq l})\rightarrow\underline{\mathrm{Hom}}_{\mathcal{D}^b_{m,(\mathbf{G}_{\mathbf{V}})_0}((\mathbf{E}_{\mathbf{V}})_0)}(K_{>m}^{<l},\mathrm{H}^m_{\geq l})$$ are isomorphic maps.

Hence, by \cite[Remark 5.3.5]{Pramod-2021}, and applying $\underline{\mathrm{Hom}}_{\mathcal{D}^b_{m,(\mathbf{G}_{\mathbf{V}})_0}((\mathbf{E}_{\mathbf{V}})_0)}(-,\mathrm{H}^m_{\geq l})$ and $\underline{\mathrm{Hom}}_{\mathcal{D}^b_{m,(\mathbf{G}_{\mathbf{V}})_0}((\mathbf{E}_{\mathbf{V}})_0)}(K_{>m}^{<l}[-1],-)$ to the triangles, we have:
\[
\operatorname{egf}\left(\mathrm{Hom}_{\mathcal{D}^b_{m,(\mathbf{G}_{\mathbf{V}})_0}((\mathbf{E}_{\mathbf{V}})_0)}(K_{>m}^{\geq l}[-1],\mathrm{H}^m_{\geq l})\right)
\cong
\operatorname{egf}\left(\mathrm{Hom}_{\mathcal{D}^b_{m,(\mathbf{G}_{\mathbf{V}})_0}((\mathbf{E}_{\mathbf{V}})_0)}(\tau_{>m}K[-1],\mathrm{H}^m_{\geq l})\right)
\]
and
\[
\operatorname{egf}\left(\mathrm{Hom}_{\mathcal{D}^b_{m,(\mathbf{G}_{\mathbf{V}})_0}((\mathbf{E}_{\mathbf{V}})_0)}(K_{>m}^{<l}[-1],^{p}\mathrm{H}^m_{<l}(K)[-m])\right)
\cong
\operatorname{egf}\left(\mathrm{Hom}_{\mathcal{D}^b_{m,(\mathbf{G}_{\mathbf{V}})_0}((\mathbf{E}_{\mathbf{V}})_0)}(K_{>m}^{<l}[-1],^{p}\mathrm{H}^m(K)[-m])\right).
\]

Therefore, after applying the $\operatorname{egf}$ functor, the diagram can be written as follows:
\[
\begin{tikzcd}
\operatorname{egf}(K_{>m}^{<l},\phi^{>m}_{<l}) \arrow[r, "\operatorname{egf}(f_m)"] \arrow[d] & \operatorname{egf}(\tau_{>m}K,\tau_{>m}\phi) \arrow[r, "\operatorname{egf}(g_m)"] \arrow[d] & \operatorname{egf}(K_{>m}^{\geq l},\phi^{>m}_{\geq l}) \arrow[d] \\
\operatorname{egf}(^{p}\mathrm{H}^m_{<l}(K)[-m],\phi_{<l}^m) \arrow[r] \arrow[d] & \operatorname{egf}(^{p}\mathrm{H}^m(K)[-m], ^{p}\mathrm{H}^m(\phi)[-m]) \arrow[d] \arrow[r] & \operatorname{egf}(\mathrm{H}^m_{\geq l},\phi^m_{\geq l}) \arrow[d] \\
\operatorname{egf}(K_{>m-1}^{<l},\phi^{>m-1}_{<l}) \arrow[r, "\operatorname{egf}(f_{m-1})"] & \operatorname{egf}(\tau_{>m-1}K,\tau_{>m-1}\phi) \arrow[r, "\operatorname{egf}(g_{m-1})"] & \operatorname{egf}(K_{>m}^{\geq l+1},\phi^{>m}_{\geq l+1})
\end{tikzcd}
\]
\emph{(Diagram 3: Construction of the weight decomposition triangle III)}

Hence the induction step holds at level $m-1$.

By induction, when $m$ is sufficiently small, $\tau_{>m}K \cong K$, completing the proof.
\end{proof}
The construction in the proof of the preceding lemma immediately gives the following corollary.

\begin{corollary}\label{corll}
For any $l' < l$, we have
\[
\operatorname{egf}(F_{<l'} F_{<l} K, (\phi_{<l})_{<l'}) \cong \operatorname{egf}(F_{<l'} K, \phi_{<l'}),
\]
and
\[
\operatorname{egf}(F_{\geq l} F_{\geq l'} K, (\phi_{\geq l'})_{\geq l}) \cong \operatorname{egf}(F_{\geq l} K, \phi_{\geq l}).
\]
\end{corollary}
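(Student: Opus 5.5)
We prove Corollary~\ref{corll} by descending induction on the perverse truncation index $m$, exactly as in the proof of Lemma~\ref{leqq}. We compare the inductive construction of $F_{<l'}F_{<l}K$ with that of $F_{<l'}K$, and similarly for the $\geq$ parts. The key point is that the weight filtration on a mixed perverse sheaf is functorial and compatible with refinement. By \cite[Theorem 5.4.16]{Pramod-2021}, for $l'<l$ the subobject ${}^p\mathrm{H}^m_{<l'}(K)$ is the weight $<l'$ part of ${}^p\mathrm{H}^m_{<l}(K)$. Dually, the quotient $\mathrm{H}^m_{\geq l}$ of ${}^p\mathrm{H}^m(K)$ is the weight $\geq l$ quotient of $\mathrm{H}^m_{\geq l'}$. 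Since $\phi^m_{<l}$ and $\phi^m_{\geq l'}$ are induced from ${}^p\mathrm{H}^m(\phi)$, functoriality of the weight filtration shows that the induced maps agree: $(\phi^m_{<l})_{<l'}=\phi^m_{<l'}$ and $(\phi^m_{\geq l'})_{\geq l}=\phi^m_{\geq l}$.

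\emph{Base case.} For $m\gg0$ we have $\tau_{>m}K=0$, so all the objects involved vanish.

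\emph{Inductive step.} Suppose that after applying $\operatorname{egf}$ we already have
\[
\operatorname{egf}\bigl(F_{<l'}\tau_{>m}F_{<l}K\bigr)\cong \operatorname{egf}\bigl(F_{<l'}\tau_{>m}K\bigr),
\]
compatibly with the $a$-structures.

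\begin{enumerate}
\item Check that $F_{<l}$ commutes with perverse truncation up to $\operatorname{egf}$. From Diagram 2, the third row identifies ${}^p\mathrm{H}^m(F_{<l}K)$ with ${}^p\mathrm{H}^m_{<l}(K)$, and $\tau_{>m}F_{<l}K$ with $K^{<l}_{>m}$. This holds because the left column of Diagram 2 is a triangle whose outer terms lie in perverse degrees $>m$ and $=m$ respectively.
\item Run the octahedral construction of Diagram 2 for $F_{<l}K$ with cutoff $l'$. By the previous paragraph, its middle row is ${}^p\mathrm{H}^m_{<l'}(K)[-m]\to{}^p\mathrm{H}^m_{<l}(K)[-m]$. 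Its top-left entry is, by the induction hypothesis, $K^{<l'}_{>m}[-1]$.
\item The mapping cone in the lower-left corner is determined up to isomorphism by the vertical map $K^{<l'}_{>m}[-1]\to{}^p\mathrm{H}^m_{<l'}(K)[-m]$. After $\operatorname{egf}$, this map is unique. Uniqueness follows from the vanishing statements \cite[Lemma 5.4.14]{Pramod-2021} and the bijectivity of $\operatorname{id}-\operatorname{Fr}$ used in the proof of Lemma~\ref{leqq}: these force the relevant Hom groups to identify with those out of $\tau_{>m}K[-1]$.
\item Hence the two cones, $\operatorname{egf}(F_{<l'}\tau_{\geq m}F_{<l}K)$ and $\operatorname{egf}(F_{<l'}\tau_{\geq m}K)$, are isomorphic. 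The $a$-maps match because every map in Diagram 3 is obtained functorially from ${}^p\mathrm{H}^m(\phi)$ and $\tau_{>m}\phi$.
\end{enumerate}

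The statement for $F_{\geq l}F_{\geq l'}$ follows by the dual argument using the right-hand column.

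\emph{Main obstacle.} Step 3 is the delicate point. Cones are not functorial, so the isomorphism of cones carrying the $\phi$'s must be made canonical. I would obtain this from the same $\operatorname{egf}$-Hom vanishing that gave uniqueness in Lemma~\ref{leqq}. Concretely, $\operatorname{egf}\operatorname{Hom}(K^{<l'}_{>m},\mathrm{H}^m_{\geq l'}[-1])=0$ makes the morphism of triangles unique. The identification $\operatorname{egf}(\phi^{>m-1}_{<l'})$ is then forced to agree with $\operatorname{egf}\bigl((\phi^{>m-1}_{<l})_{<l'}\bigr)$, since both are morphisms of triangles extending the same pair of maps on the outer terms.
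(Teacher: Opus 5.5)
Your route is the paper's: the paper gives no separate argument and only says that the construction in the proof of Lemma~\ref{leqq} ``immediately gives'' the corollary. However, the step you single out as the main obstacle is not resolved by your proposal, and the induction does not close as written.

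The vanishing available from Lemma~\ref{leqq} is $\operatorname{egf}\Hom(K^{<l'}_{>m}[-1],\mathrm{H}^m_{\geq l'}[-1])=0$. The group $\Hom(K^{<l'}_{>m},\mathrm{H}^m_{\geq l'}[-1])$ that you quote allows weight $l'-1$ on both sides and need not vanish. The correct vanishing only makes unique the factorization $u\colon K^{<l'}_{>m}[-1]\to{}^p\mathrm{H}^m_{<l'}(K)[-m]$ of a \emph{given} composite $K^{<l'}_{>m}[-1]\to\tau_{>m}K[-1]\to{}^p\mathrm{H}^m(K)[-m]$. It says nothing about the third component of a morphism of triangles. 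If $X$ and $Y$ are two cones of $u$, two fillers $X\to Y$ that agree on ${}^p\mathrm{H}^m_{<l'}(K)[-m]$ and on $K^{<l'}_{>m}$ differ by a map $X\to K^{<l'}_{>m}\xrightarrow{w}Y$. Such $w$ are governed by groups like $\Hom(K^{<l'}_{>m},{}^p\mathrm{H}^m_{<l'}(K)[-m])$, for instance $\Ext^1$ between perverse constituents in adjacent degrees. Both arguments there have weights $<l'$, so no weight estimate kills them. The same objection applies to your step 4: $\phi^{>m-1}_{<l'}$ and $(\phi^{>m-1}_{<l})_{<l'}$ are fillers, not functorially determined. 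The fact that both extend the same outer maps does not make them agree.

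This breaks the induction concretely. To identify the two factorization maps at level $m-1$, you need the isomorphism $\operatorname{egf}(F_{<l'}\tau_{\geq m}F_{<l}K)\cong\operatorname{egf}(F_{<l'}\tau_{\geq m}K)$ to commute with the structure maps to $\tau_{\geq m}K$. Your inductive hypothesis does not record this compatibility even at level $m$, although step 2 uses it. The cone argument only produces an abstract isomorphism. Moreover, the maps $F_{<l'}\tau_{\geq m}K\to\tau_{\geq m}K$ are themselves non-unique choices made in the $3\times3$ completion of Diagram 2. So nothing forces two independently run constructions to match compatibly.

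The way out is to use that freedom rather than fight it. Show that $F_{<l'}F_{<l}K$, with the composite $F_{<l'}F_{<l}K\to F_{<l}K\to K$ and the composite $a$-structure, is itself an admissible weight-$<l'$ piece of $K$ in the sense of Lemma~\ref{leqq}.
\begin{itemize}
\item Its perverse cohomology in degree $m$ is ${}^p\mathrm{H}^m_{<l'}(K)$, by your step 1 and the strictness of weight filtrations.
\item By the octahedral axiom, the cone of the composite is an extension of $F_{\geq l}K$ by $F_{\geq l'}F_{<l}K$, hence has weights $\geq l'$.
\end{itemize}
One may therefore take $F_{<l'}K:=F_{<l'}F_{<l}K$, and the first isomorphism holds by construction. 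Dually, the fiber of $K\to F_{\geq l'}K\to F_{\geq l}F_{\geq l'}K$ is an extension of $F_{<l}F_{\geq l'}K$ by $F_{<l'}K$, hence has weights $<l$. This gives the second isomorphism, which is the one needed later for the triangle $\operatorname{gr}_{l-1}K\to F_{\geq l-1}K\to F_{\geq l}K$.
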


Therefore, there exists a distinguished triangle
\[
\operatorname{egf}(F_{<l} F_{\geq l-1} K, (\phi_{\geq l-1})_{<l}) \to \operatorname{egf}(F_{\geq l-1} K, \phi_{\geq l-1}) \to \operatorname{egf}(F_{\geq l} F_{\geq l-1} K, (\phi_{\geq l-1})_{\geq l}) \xrightarrow{+1}.
\]
Since $F_{<l} F_{\geq l-1} K$ is a pure perverse sheaf of weight $l-1$, by \cite[Theorem 5.4.5]{BBD}, $\operatorname{egf}(F_{<l} F_{\geq l-1} K)$ is semisimple. We denote
\[
\operatorname{gr}_{l-1} K = \operatorname{egf}(F_{<l} F_{\geq l-1} K).
\]
By \cite[11.1.3]{lusztig2010introduction},
\[
(\operatorname{gr}_{l-1} K ,(\phi_{\geq l-1})_{<l})\cong \bigoplus_{P\text{ is simple perverse, }a^*P\cong P}(P,\phi)\oplus \bigoplus_{(Q,\psi)\text{ is traceless}}(Q,\psi).
\]
Since
\[
C_{\lambda} = \operatorname{egf}\bigl((j_{\lambda})_! \overline{\mathbb{Q}}_l |_{(\mathcal{O}_{\lambda})_0}\bigr)[\dim \mathcal{O}_{\lambda}],
\]
and $\dim\Hom_{\cD^b_{\bfG_{\bfV}}}(\bfE_{\bfV})(a^*C_{\lambda},C_{\lambda})=1$. Thus $(C_{\lambda},\psi)$ is isomorphic to the mapping cone of ${}^p\mathrm{H}^m((C_{\lambda},\psi))$, and we can apply Lemma \ref{leqq} and Corollary \ref{corll} to $(C_{\lambda},\psi)$.
By the decomposition theorem in \cite{BBD}, it follows that $\operatorname{gr}_l C_{\lambda}$ is semisimple. We define
\[
Q(C_{\lambda})_l = \mathrm{Ext}^{\bullet}(\operatorname{gr}_l C_{\lambda}, L),
\]
and write $h_{l-1} := \operatorname{egf}\bigl((\phi_{\geq l-1})_{<l}\bigr)$, which naturally induces
\[
\hat{h}_{l-1} : \mathbf{a}^* Q(C_{\lambda})_{l-1} \to Q(C_{\lambda})_{l-1}.
\]

Using the method of Kato \cite{cd088214-009e-3813-9a86-97f9a299a24c}, we also obtain the following theorem.

\begin{theorem}
The complex $(Q(C_{\lambda}), d),$ where $d_i : Q(C_{\lambda})_i \to Q(C_{\lambda})_{i+1}$ constructed above satisfies
\[
\mathrm{H}^{\bullet}\bigl((Q(C_{\lambda}), d)\bigr) = \widetilde{K}_{\lambda},
\]
and
\[
h_{i+1} \circ \mathbf{a}^* d_i = d_i \circ h_i.
\]
Moreover, $(Q(C_{\lambda}), d)$ is a projective resolution of $\widetilde{K}_{\lambda}$, and the collection $\{h_i\}$ induces the map $\gamma_{\lambda}$.
\end{theorem}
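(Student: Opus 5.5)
We follow Kato's argument in \cite{cd088214-009e-3813-9a86-97f9a299a24c} and keep track of the $a$-structures throughout. The first step is to construct the differentials. Lemma \ref{leqq} and Corollary \ref{corll} give, for each $l$, a distinguished triangle
\[
\operatorname{gr}_{l-1}C_{\lambda}\to \operatorname{egf}(F_{\geq l-1}C_{\lambda})\to \operatorname{egf}(F_{\geq l}C_{\lambda})\xrightarrow{+1},
\]
and all its maps commute with the $\phi$'s. Composing the connecting map $\operatorname{egf}(F_{\geq l}C_\lambda)\to \operatorname{gr}_{l-1}C_\lambda[1]$ with the map $\operatorname{gr}_l C_\lambda\to \operatorname{egf}(F_{\geq l}C_\lambda)$ gives a morphism $\operatorname{gr}_l C_\lambda\to \operatorname{gr}_{l-1}C_\lambda[1]$. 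Applying $\mathrm{Ext}^{\bullet}(-,L)$ then yields $d$ between the $Q(C_\lambda)_l$, after reindexing by degree. We check $d^2=0$ in the usual way: two consecutive connecting maps compose through an octahedron to zero.

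The second step is the relation $h_{i+1}\circ\mathbf{a}^*d_i=d_i\circ h_i$. This holds because each triangle above is a morphism of triangles over $a^*$: Lemma \ref{leqq} says $\operatorname{egf}(\phi_{<l})$, $\operatorname{egf}(\phi)$ and $\operatorname{egf}(\phi_{\geq l})$ commute with the triangle maps. Hence the connecting morphisms commute with the $h$'s. Passing to $\mathrm{Ext}^\bullet(-,L)$ and using the formula $\hat\phi(f)=\phi_0[n]\circ a^*(f)\circ\phi^{-1}$ converts this into the stated identity.

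The third step shows that $Q(C_\lambda)_l$ is projective and that the complex resolves $\widetilde K_\lambda$. Each $\operatorname{gr}_l C_\lambda$ is semisimple and, by \cite[11.1.3]{lusztig2010introduction}, is a sum of $a$-stable simples in $\mathcal{P}_{\bfV}$ and traceless pieces. Corollary \ref{apro} then identifies $\mathrm{Ext}^\bullet(\operatorname{gr}_lC_\lambda,L)$ with a projective $R(\nu)\langle\mathbf{a}\rangle$-module. Forgetting $\mathbf{a}$, the complex is exactly Kato's complex, so \cite[Theorem 2.7]{cd088214-009e-3813-9a86-97f9a299a24c} gives that its only cohomology is $\mathrm{Ext}^\bullet(C_\lambda,L)=\widetilde K_\lambda$. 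The key input there is purity: $\mathrm{Ext}^\bullet(\operatorname{gr}_l,L)$ is pure of weight $l$ because $L$ is pure. This forces the weight spectral sequence of the filtration $F_{\geq\bullet}C_\lambda$ to degenerate at $E_2$, with $E_2$ concentrated on one line.

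Finally, the augmentation $Q(C_\lambda)_{\text{top}}\to\widetilde K_\lambda$ comes from $\operatorname{gr}\to C_\lambda$, which commutes with the $\phi$'s. So the map induced on cohomology by $\{h_i\}$ is $\hat\psi_\lambda=\gamma_\lambda$. The step we expect to be the main obstacle is this degeneration argument combined with the bookkeeping of $a$. We must ensure that the weight spectral sequence, built from triangles which only become morphisms of $a$-triangles after applying $\operatorname{egf}$, still induces $a$-equivariant isomorphisms. We would handle this by working entirely in the mixed category before applying $\operatorname{egf}$, and by using Frobenius-weight arguments to see that $\mathrm{Hom}$-spaces between different weights vanish. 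This makes all the relevant lifts unique, so equivariance is automatic.
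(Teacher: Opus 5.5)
Your proposal is correct and follows essentially the paper's route: $d$ is the composite of the connecting morphisms of the $a$-compatible weight triangles $\operatorname{gr}_{l}C_\lambda\to F_{\geq l}C_\lambda\to F_{\geq l+1}C_\lambda\xrightarrow{+1}$, the identity $h_{i+1}\circ\mathbf{a}^*d_i=d_i\circ h_i$ comes from these being morphisms of triangles, and exactness rests on Kato's pointwise-purity argument (the paper reruns that argument as a descending induction on $l$ and cites Kato's Proposition~2.8, whereas you forget $\mathbf{a}$ and quote Kato directly, which is cleaner). Two small fixes: because $\mathrm{Ext}^{\bullet}(-,L)$ is contravariant, the augmentation $\mathbf{P}_\lambda\to\widetilde{K}_\lambda$ is induced by the natural map $C_\lambda\to IC(\mathcal{O}_\lambda,\overline{\mathbb{Q}}_l)$ to the top-weight piece (the surjection $f$ in the paper), not by a map $\operatorname{gr}\to C_\lambda$; and purity of the $E_1$ terms only gives degeneration at $E_2$, while concentration on a single line needs purity of $\mathrm{Ext}^{\bullet}(C_\lambda,L)$, i.e.\ pointwise purity, not just purity of $L$.
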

\begin{proof}
For brevity, we denote $\operatorname{egf} F_{\geq l-1} (j_{\lambda})_! \overline{\mathbb{Q}}_l |_{(\mathcal{O}_{\lambda})_0} [\dim \mathcal{O}_{\lambda}]$ by $F_{\geq l-1} C_{\lambda}$, and denote the induced map by $h_{\geq l-1}$.
Consider the morphism of distinguished triangles:
\[
(\operatorname{gr}_{l-1}C_{\lambda},h_{l-1}) \to 
(F_{\geq l-1} C_{\lambda}, h_{\geq l-1}) \to 
(F_{\geq l} C_{\lambda}, h_{\geq l}) \xrightarrow{+1}.
\]
Applying $\mathrm{Ext}^{\bullet}(-, L)$, we obtain the following morphisms between short exact sequences:
\[
\begin{tikzcd}
\mathbf{a}^* \mathrm{Ext}^{\bullet}(F_{\geq l+1} C_{\lambda}, L) \arrow[d, "\hat{h}_{\geq l}"] \arrow[r] &
\mathbf{a}^* \mathrm{Ext}^{\bullet}(F_{\geq l} C_{\lambda}, L) \arrow[d, "\hat{h}_{\geq l-1}"] \arrow[r] &
\mathbf{a}^* Q(C_{\lambda})_l \arrow[d, "\hat{h}_l"] \arrow[r, "a^* \delta"] &
\mathbf{a}^* \mathrm{Ext}^{\bullet}(F_{\geq l+1} C_{\lambda}, L) \langle -1 \rangle \\
\mathrm{Ext}^{\bullet}(F_{\geq l+1} C_{\lambda}, L) \arrow[r] &
\mathrm{Ext}^{\bullet}(F_{\geq l} C_{\lambda}, L) \arrow[r] &
Q(C_{\lambda})_l \arrow[r, "\delta"] &
\mathrm{Ext}^{\bullet}(F_{\geq l+1} C_{\lambda}, L) \langle -1 \rangle
\end{tikzcd}
\]

We prove by descending induction on $l$ that
\[
\mathrm{H}^{\bullet}((Q(C_{\lambda})_{\geq l}, d)) \cong \mathrm{Ext}^{\bullet}(F_{\geq l} C_{\lambda}, L),
\]
and that for $i \geq l$, $h_{i+1} \circ \mathbf{a}^* d_i = d_i \circ h_i$. For sufficiently large $l$, the modules involved are zero, so the statement holds in the initial case.

Assume it holds for $l+1$. For $l$, consider the connecting map $\delta$ in the short exact sequence above, which has degree one. The module $Q(C_{\lambda})_l$ is pure of weight $-l$, while $\mathrm{Ext}^{\bullet}(F_{\geq l+1} C_{\lambda}, L)$ has weights $< -l$. By the inductive hypothesis, $\mathrm{H}^{\bullet}((Q(C_{\lambda})_{\geq l+1}, d))$ also has weights $< -l$.

Thus, $\ker \delta$ has weight $-l$ and $\operatorname{coker} \delta \langle 1 \rangle$ has weight $< -l$. Moreover, $\ker \delta$ and $\operatorname{coker} \delta \langle 1 \rangle$ are, respectively, an $R(\nu)$-quotient module and a submodule of $\mathrm{Ext}^{\bullet}(F_{\geq l+1} C_{\lambda}, L)$. Dimension-vector computations show
\[
\mathrm{Ext}^{\bullet}(F_{\geq l+1} C_{\lambda}, L) \cong \ker \delta \oplus \operatorname{coker} \delta \langle 1 \rangle.
\]
Since $\ker d_{l+1}$ is the $-(l+1)$-weight summand of $\mathrm{H}^{\bullet}((Q(C_{\lambda})_{\geq l+1}, d))$, we have $\operatorname{Im} \delta \subset \ker d_{l+1}$. We thus define
\[
d_l : Q(C_{\lambda})_l \to Q(C_{\lambda})_{l+1} \langle -1 \rangle
\]
as the shift of $\delta$ to degree zero.

Moreover,
\[
\mathrm{Ext}^{\bullet}(F_{\geq l+1} C_{\lambda}, L) \cong \ker d_{l+1} \oplus_{s>l+1} \mathrm{H}^s((Q(C_{\lambda})_{\geq l+1}, d)),
\]
so via the short exact sequence, we find
\[
\bigoplus_{s>l} \mathrm{H}^s((Q(C_{\lambda})_{\geq l}, d)) \cong \operatorname{coker} \delta \langle 1 \rangle.
\]
Thus $\mathrm{H}^{\bullet}\bigl((Q(C_{\lambda}), d)\bigr) = \widetilde{K}_{\lambda}$. By the pointwise purity of $C_{\lambda}$, this gives a projective resolution, as in \cite[Proposition 2.8]{cd088214-009e-3813-9a86-97f9a299a24c}.
Since $d_l$ is obtained by shifting the connecting morphism, it is clear that
\[
h_{i+1} \circ \mathbf{a}^* d_i = d_i \circ h_i.
\]
\end{proof}

Thus, from the construction of $(Q(C_{\lambda}), d)$ and $\hat{h}_l$, it is evident that, as an $R(\nu)\langle\mathbf{a}\rangle$-module, each direct summand in this resolution is either projective or traceless. Moreover, when $a =\mathbf{a}= \mathrm{id}$, these results recover Kato's original results.

We now compute the value of the pairing $\langle -, - \rangle_{\nu}$ between $[\mathbb{D}(K_{\mu}, \beta_{\mu})]$ (the dual in $\mathcal{L}_{\nu}$) and $[(\widetilde{K}_{\lambda}, \gamma_{\lambda})]$, where $\mu, \lambda$ are $a$-stable orbits in $\Lambda$.

\begin{proposition}
    For any $a$-stable orbits $\mu, \lambda$ in $\Lambda$, we have
    \[
        \langle [(\widetilde{K}_{\lambda}, \gamma_{\lambda})], [\mathbb{D}(K_{\mu}, \beta_{\mu})] \rangle_{\nu} = \delta_{\lambda, \mu}.
    \]
\end{proposition}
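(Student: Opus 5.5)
The pairing is computed from the projective resolution $(Q(C_{\lambda}),d)$ of $\widetilde{K}_{\lambda}$ from the preceding theorem. Because that resolution is compatible with the maps $\hat h_l$, $\langle[(\widetilde K_\lambda,\gamma_\lambda)],-\rangle_\nu$ becomes an alternating sum of twisted traces of $\mathrm{Hom}$ from the terms $(Q(C_\lambda)_l,\hat h_l)$. In $\mathcal K(\mathcal P_\nu)$ we can write
\[
[(\widetilde K_\lambda,\gamma_\lambda)]=\sum_{l}(-1)^l[(Q(C_\lambda)_l,\hat h_l)].
\]
Each term decomposes as a sum of the self-dual indecomposable projectives $(\mathbf P_{\kappa},\hat\phi_\kappa)$, up to shifts and powers of $\zeta$, plus traceless summands. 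The traceless summands vanish in the Grothendieck group by relation (4).

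The coefficient of $(\mathbf P_\kappa,\hat\phi_\kappa)$ is the twisted graded multiplicity of $(IC(\mathcal O_\kappa),\alpha_\kappa)$ in $\bigoplus_l \operatorname{gr}_l C_\lambda$. Since $C_\lambda=(j_\lambda)_!\overline{\mathbb Q}_l[\dim\mathcal O_\lambda]$ is supported on $\overline{\mathcal O_\lambda}$ and $j_\lambda^*\alpha_\lambda=\operatorname{id}$, only $\kappa\le\lambda$ occurs, and $IC(\mathcal O_\lambda)$ occurs exactly once, in weight $0$, with coefficient $1$. Hence
\[
[(\widetilde K_\lambda,\gamma_\lambda)]=[(\mathbf P_\lambda,\hat\phi_\lambda)]+\sum_{\kappa<\lambda}c_{\lambda\kappa}[(\mathbf P_\kappa,\hat\phi_\kappa)],
\]
where the $c_{\lambda\kappa}\in\mathcal O[t,t^{-1}]$ are the twisted local Poincaré polynomials (stalk data of $C_\lambda$ against the $IC$'s). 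This triangularity statement is probably used in the paper anyway.

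Dually, for $[\mathbb D(K_\mu,\beta_\mu)]$ I will use the definition of $K_\mu$ as $\mathbf P_\mu$ modulo the images of positive-degree maps from $\mathbf P_\kappa$ with $\kappa\le\mu$. With the orthogonality $\langle[(\mathbf P_P,\hat\phi_P)],[(\mathbf L_{P'},\phi_{L_{P'}})]\rangle_\nu=\delta_{P,P'}$, this expresses $[\mathbb D(K_\mu,\beta_\mu)]$ in the simple basis: it equals $[(\mathbf L_\mu,\theta_\mu)]$ plus $\bar t$-type corrections from $\mathbf L_\kappa$ with $\kappa>\mu$ or related to $\mu$. Equivalently, I would compute the pairing directly:
\[
\langle[(\widetilde K_\lambda,\gamma_\lambda)],[\mathbb D(K_\mu,\beta_\mu)]\rangle_\nu=\sum_{i,j}(-1)^j\operatorname{tr}\bigl(a,\ \mathrm{Ext}^j(\widetilde K_\lambda,\mathbb D K_\mu)_i\bigr)t^i.
\]
This is an Ext between a dual standard and a costandard module. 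Kato's theorem (\cite{cd088214-009e-3813-9a86-97f9a299a24c}, Ext-orthogonality in the affine highest weight structure) gives $\mathrm{Ext}^{j}(\widetilde K_\lambda,\mathbb D K_\mu)=0$ unless $j=0$ and $\lambda=\mu$, and $\mathrm{Hom}(\widetilde K_\lambda,\mathbb DK_\lambda)$ is one-dimensional in degree $0$, spanned by the composite $\widetilde K_\lambda\twoheadrightarrow \mathbf L_\lambda\hookrightarrow\mathbb D K_\lambda$. Here I use that $\mathbf P_\lambda\twoheadrightarrow\widetilde K_\lambda$ by the surjection $f$ constructed above, that $\mathbf L_\lambda$ is the head, and that it is self-dual.

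It remains to compute the trace of $a_{\gamma_\lambda,\mathbb D\beta_\lambda}$ on this line. Both $\gamma_\lambda$ and $\beta_\lambda$ are induced from $\hat\phi_\lambda$ on $\mathbf P_\lambda$: $\gamma_\lambda$ via $f$, with $j_\lambda^*\alpha_\lambda=\operatorname{id}$ compatible with the canonical $\psi_\lambda$, and $\beta_\lambda$ by descent. Restricted to the common head $(\mathbf L_\lambda,\theta_\lambda)$, both act by the same map, and the duality $\mathbb D$ inverts it. So the generating morphism is fixed and the trace is $1$, giving $\delta_{\lambda,\mu}$.

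The main obstacle is justifying the Ext-vanishing in the twisted setting and making sure the twisted trace at $\lambda=\mu$ is exactly $1$ rather than some $\zeta^k$. For the vanishing, since $\mathbf a$ acts on the Ext-spaces, vanishing for $R(\nu)$ implies vanishing for the traces, so Kato's result suffices. For the normalization I would carefully track that $\psi_\lambda$ restricts to the identity on $\mathcal O_\lambda$ and that $\hat\alpha_\lambda$ induces $\theta_\lambda$ on the head.
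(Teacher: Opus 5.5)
Your operative argument is essentially the paper's proof. Kato's Ext-orthogonality (his Proposition 3.9) kills every term except $j=0$, $\lambda=\mu$, and the twisted trace on the one-dimensional space $\Hom(\widetilde K_\lambda,\mathbb D K_\lambda)$ equals $1$ because its generator factors through the self-dual common head $(\mathbf L_\lambda,\theta_\lambda)$. The opening detour through the resolution $(Q(C_\lambda),d)$ and the triangular expansions is not needed, and on its own it would only show that the pairing matrix is unitriangular, not that it equals $\delta_{\lambda,\mu}$.
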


\begin{proof}
This follows from Proposition 3.9 in \cite{cd088214-009e-3813-9a86-97f9a299a24c}, where Kato proved that

\[
    \Ext^k_{R(\nu)\text{-mod}}(\widetilde{K}_{\lambda}, \mathbb{D}K_{\mu}) = 0 \quad \text{for } k \neq 0 \text{ or } \mu \neq \lambda,
\]
and
\[
    \Hom_{R(\nu)\text{-mod}}(\widetilde{K}_{\lambda}, \mathbb{D}K_{\lambda}) \cong \Hom_{R(\nu)\text{-mod}}(\widetilde{K}_{\lambda}, \mathbb{D}\mathbf{L}_{\lambda}) \cong k.
\]

Since $(\mathbf{P}_{\lambda}, \hat{\phi}_{\lambda})$ is self-dual, it follows that $(\mathbf{L}_{\lambda}, \theta_{\lambda})$ is also self-dual. Therefore, the induced action of $a_{\gamma_{\lambda}, \beta_{\mu}}$ on $\Hom_{R(\nu)\text{-mod}}(\widetilde{K}_{\lambda}, \mathbb{D}K_{\lambda})$ coincides with that of $a_{\gamma_{\lambda}, \theta_{\lambda}}$ on $\Hom_{R(\nu)\text{-mod}}(\widetilde{K}_{\lambda}, \mathbb{D}\mathbf{L}_{\lambda})$, both being the identity. Hence,
\[
    \langle [(\widetilde{K}_{\lambda}, \gamma_{\lambda})], [\mathbb{D}(K_{\mu}, \beta_{\mu})] \rangle_{\nu} = \delta_{\lambda, \mu}.
\]
\end{proof}
We already have the dual bases $\{[(\mathbf{P}_{\lambda}, \hat{\phi}_{\lambda})] \mid \lambda \in \Lambda,\ a\lambda=\lambda \}$ in $\mathbb{K}(\mathcal{P}_{\nu})$ and $\{[(\mathbf{L}_{\mu}, \theta_{\mu})] \mid \mu \in \Lambda,\ a\mu=\mu \}$ in $\mathbb{K}(\mathcal{L}_{\nu})$. Having obtained another pair of dual bases, $\{[(\widetilde{K}_{\lambda}, \gamma_{\lambda})]\}$ and $\{[\mathbb{D}(K_{\mu}, \beta_{\mu})]\}$, we may express them as follows:

In $\mathbb{K}(\mathcal{P}_{\nu})$, we write:
\[
[(\widetilde{K}_{\lambda}, \gamma_{\lambda})] = \sum_{\mu \in \Lambda,\ a\text{-invariant}} [(\widetilde{K}_{\lambda}, \gamma_{\lambda}):(\mathbf{P}_{\mu}, \hat{\phi}_{\mu})] \cdot [(\mathbf{P}_{\mu}, \hat{\phi}_{\mu})],
\]
\[
[(\mathbf{P}_{\mu}, \hat{\phi}_{\mu})] = \sum_{\lambda \in \Lambda,\ a\text{-invariant}} [(\mathbf{P}_{\mu}, \hat{\phi}_{\mu}):(\widetilde{K}_{\lambda}, \gamma_{\lambda})] \cdot [(\widetilde{K}_{\lambda}, \gamma_{\lambda})].
\]

Similarly, in $\mathbb{K}(\mathcal{L}_{\nu})$, we write
\[
[\mathbb{D}(K_{\lambda}, \beta_{\lambda})] = \sum_{\mu \in \Lambda,\ a\text{-invariant}} [\mathbb{D}(K_{\lambda}, \beta_{\lambda}):(\mathbf{L}_{\mu}, \theta_{\mu})] \cdot [(\mathbf{L}_{\mu}, \theta_{\mu})],
\]
\[
[(\mathbf{L}_{\mu}, \theta_{\mu})] = \sum_{\lambda \in \Lambda,\ a\text{-invariant}} [(\mathbf{L}_{\mu}, \theta_{\mu}):\mathbb{D}(K_{\lambda}, \beta_{\lambda})] \cdot [\mathbb{D}(K_{\lambda}, \beta_{\lambda})],
\]

where $[(\widetilde{K}_{\lambda}, \gamma_{\lambda}):(\mathbf{P}_{\mu}, \hat{\phi}_{\mu})],\ [\mathbb{D}(K_{\lambda}, \beta_{\lambda}):(\mathbf{L}_{\mu}, \theta_{\mu})] \in \mathcal{O}[t, t^{-1}]$.

We define the bar involution on $\mathcal{O}[t, t^{-1}]$ by
\[
\bar{t} = t^{-1},\quad \bar{\zeta} = \zeta^{-1},\quad \bar{n} = n \quad \text{for } n \in \mathbb{Z}.
\]
We call the basis $[(\mathbf{P}_{\mu}, \hat{\phi}_{\mu})]$ the signed canonical basis, always choosing $\phi_{\mu}=\alpha_{\mu}$, and we call $[(\widetilde{K}_{\lambda}, \gamma_{\lambda})]$ the PBW basis.
\begin{theorem}
    For any $a$-invariant orbits $\lambda,\mu \in \Lambda$, we have:
    \[
        [(\mathbf{P}_{\mu}, \hat{\phi}_{\mu}):(\widetilde{K}_{\lambda}, \gamma_{\lambda})] = [(K_{\lambda}, \beta_{\lambda}):(\mathbf{L}_{\mu}, \theta_{\mu})].
    \]
    In particular, these coefficients lie in $\mathcal{O}[t, t^{-1}]$.
\end{theorem}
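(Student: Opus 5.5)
The plan is to prove this BGG-reciprocity type statement by linear algebra with the two pairs of dual bases already available. It rests on the pairing $\langle\ ,\ \rangle_\nu$ on $\mathcal{K}(\mathcal{P}_\nu)\times\mathcal{K}(\mathcal{L}_\nu)$ and two duality identities:
\[
\langle [(\mathbf{P}_{\mu}, \hat{\phi}_{\mu})], [(\mathbf{L}_{\mu'}, \theta_{\mu'})] \rangle_{\nu}=\delta_{\mu,\mu'},
\qquad
\langle [(\widetilde{K}_{\lambda}, \gamma_{\lambda})], [\mathbb{D}(K_{\lambda'}, \beta_{\lambda'})] \rangle_{\nu}=\delta_{\lambda,\lambda'}.
\]
The first is stated before the subsection, and the second is the preceding Proposition.

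First, expand $[(\mathbf{P}_{\mu}, \hat{\phi}_{\mu})]$ in the PBW basis and pair it with $[\mathbb{D}(K_{\lambda}, \beta_{\lambda})]$. Using the bilinearity rule $\langle t^i\zeta^j x, y\rangle=t^i\zeta^j\langle x,y\rangle$ in the first slot, this gives
\[
[(\mathbf{P}_{\mu}, \hat{\phi}_{\mu}):(\widetilde{K}_{\lambda}, \gamma_{\lambda})]=\langle [(\mathbf{P}_{\mu}, \hat{\phi}_{\mu})], [\mathbb{D}(K_{\lambda}, \beta_{\lambda})]\rangle_\nu .
\]
Second, expand $[\mathbb{D}(K_{\lambda}, \beta_{\lambda})]$ in the simple basis and pair with $[(\mathbf{P}_{\mu}, \hat{\phi}_{\mu})]$. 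Since the second slot is conjugate-linear, this gives
\[
\langle [(\mathbf{P}_{\mu}, \hat{\phi}_{\mu})], [\mathbb{D}(K_{\lambda}, \beta_{\lambda})]\rangle_\nu=\overline{[\mathbb{D}(K_{\lambda}, \beta_{\lambda}):(\mathbf{L}_{\mu}, \theta_{\mu})]} .
\]
For this step I would make sure the pairing against a projective $\mathbf{P}_\mu$ computes an honest trace of the $\mathbf{a}$-action on the graded multiplicity space $\Hom(\mathbf{P}_\mu,-)$. Because $\mathbf{P}_\mu$ is projective, the alternating sum collapses to $j=0$, and the whole expression is exact in the second variable. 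Traceless constituents contribute trace $0$, which is consistent with relation (4).

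Third, identify the conjugated coefficient of $\mathbb{D}(K_\lambda,\beta_\lambda)$ with the coefficient of $(K_\lambda,\beta_\lambda)$. The duality $\mathbb{D}$ on $\mathcal{L}_\nu$ sends $\langle n\rangle$ to $\langle -n\rangle$ and replaces $\psi$ by $(\mathbb{D}\psi)^{-1}$, which conjugates eigenvalues, so $\zeta\mapsto\zeta^{-1}$. Hence $\mathbb{D}$ acts semilinearly on $\mathcal{K}(\mathcal{L}_\nu)$ for the bar involution. Each $(\mathbf{L}_\mu,\theta_\mu)$ is self-dual, as noted in the proof of the preceding Proposition: it is the top of the self-dual $(\mathbf{P}_\mu,\hat\phi_\mu)$ from Proposition~\ref{dual}. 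Therefore
\[
[\mathbb{D}(K_{\lambda},\beta_\lambda):(\mathbf{L}_\mu,\theta_\mu)]=\overline{[(K_\lambda,\beta_\lambda):(\mathbf{L}_\mu,\theta_\mu)]},
\]
and combining with the first two steps proves the equality. Integrality then follows because composition multiplicities of $(K_\lambda,\beta_\lambda)$ are traces of a finite-order operator on graded multiplicity spaces, and such traces lie in $\mathbb{Z}[\zeta][t,t^{-1}]=\mathcal{O}[t,t^{-1}]$. Here $K_\lambda$ is a finite-length quotient of $\mathbf{P}_\lambda$ in finite type.

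The main obstacle I expect is the third step, together with checking that the trace on $\Hom(\mathbf{P}_\mu,M)_i$ really equals the $(\mathbf{L}_\mu,\theta_\mu)$-coefficient of $[M]$ with exactly the right twist. The danger is an inverse or conjugate slipping in through $a_{\phi,\psi}=(\mathbf{a}^*)^{-1}(\psi^{-1}\circ f\circ\phi)$. I would first verify this on $M=(\mathbf{L}_{\mu'},\zeta^k\theta_{\mu'})$, where it should give $\delta_{\mu\mu'}\zeta^{\pm k}$. I would then fix conventions so that the sign of the exponent matches on both sides, using the normalization $\phi_\mu=\alpha_\mu$ with $j_\mu^*\alpha_\mu=\mathrm{id}$.
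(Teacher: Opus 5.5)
Your proposal is correct and is essentially the paper's argument. The paper likewise plays the two pairs of dual bases off against each other through $\langle\ ,\ \rangle_\nu$ (in matrix form, $A^{-1}\cdot\overline{B^{-1}}=\operatorname{id}$) and then invokes $[\mathbb{D}(K_{\lambda},\beta_{\lambda}):(\mathbf{L}_{\mu},\theta_{\mu})]=\overline{[(K_{\lambda},\beta_{\lambda}):(\mathbf{L}_{\mu},\theta_{\mu})]}$; your entrywise computation and your justification of that identity (bar-semilinearity of $\mathbb{D}$ on $\mathcal{K}(\mathcal{L}_\nu)$ plus self-duality of $(\mathbf{L}_\mu,\theta_\mu)$) just make explicit what the paper states without comment.
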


\begin{proof}
    Consider the matrices
    \[
    A = \left( [(\mathbf{P}_{\mu}, \hat{\phi}_{\mu}):(\widetilde{K}_{\lambda}, \gamma_{\lambda})] \right)_{\mu,\lambda} = \left( [(\widetilde{K}_{\lambda}, \gamma_{\lambda}):(\mathbf{P}_{\mu}, \hat{\phi}_{\mu})] \right)_{\lambda,\mu}^{-1},
    \]
    and
    \[
    B = \left( [(\mathbf{L}_{\mu}, \theta_{\mu}):\mathbb{D}(K_{\lambda}, \beta_{\lambda})] \right)_{\lambda,\mu} = \left( [\mathbb{D}(K_{\lambda}, \beta_{\lambda}):(\mathbf{L}_{\mu}, \theta_{\mu})] \right)_{\mu,\lambda}^{-1}.
    \]

    From the pairing, we deduce:
    \[
    A^{-1} \cdot \overline{B^{-1}} = \operatorname{id},
    \]
    and since
    \[
    [\mathbb{D}(K_{\lambda}, \beta_{\lambda}):(\mathbf{L}_{\mu}, \theta_{\mu})] = \overline{[(K_{\lambda}, \beta_{\lambda}):(\mathbf{L}_{\mu}, \theta_{\mu})]},
    \]
    it follows that:
    \[
    [(\mathbf{P}_{\mu}, \hat{\phi}_{\mu}):(\widetilde{K}_{\lambda}, \gamma_{\lambda})] = [(K_{\lambda}, \beta_{\lambda}):(\mathbf{L}_{\mu}, \theta_{\mu})].
    \]
\end{proof}

\begin{remark}
    As shown in \cite{cd088214-009e-3813-9a86-97f9a299a24c}, if $\mu < \lambda$, then the filtration of $K_{\lambda}$ contains no copy of $\mathbf{L}_{\mu}$ and contains exactly one copy of $\mathbf{L}_{\lambda}$ in degree zero. Since any simple summand in the filtration of $(K_{\lambda}, \beta_{\lambda})$ must be of the form $(\mathbf{L}_{\mu}, \zeta^r \theta_{\mu})$ or a trace-zero element, and if such $(\mathbf{L}_{\mu},\zeta^r \theta_{\mu})$ appears in the filtration of $K_{\lambda}$, it follows that $\mu \geq \lambda$.

    Furthermore, since $(\mathbf{L}_{\lambda}, \theta_{\lambda})$ appears as the top of $(K_{\lambda}, \beta_{\lambda})$, we have:
    \[
    [(K_{\lambda}, \beta_{\lambda}):(\mathbf{L}_{\lambda}, \theta_{\lambda})] = 1.
    \]
    Therefore, the matrix $\left( [(K_{\lambda}, \beta_{\lambda}):(\mathbf{L}_{\mu}, \theta_{\mu})] \right)_{\lambda,\mu}$ is upper triangular over $\mathcal{O}[t, t^{-1}]$ with $1$'s along the diagonal. In particular, for $\mu < \lambda$, we have:
    \[
    [(\mathbf{P}_{\mu}, \hat{\phi}_{\mu}):(\widetilde{K}_{\lambda}, \gamma_{\lambda})] = 0,
    \]
    and for $\mu = \lambda$:
    \[
    [(\mathbf{P}_{\lambda}, \hat{\phi}_{\lambda}):(\widetilde{K}_{\lambda}, \gamma_{\lambda})] = 1.
    \]
\end{remark}
Thus, if we denote
\[
p_{\mu,\lambda}(t)=[(\mathbf{P}_{\mu}, \hat{\phi}_{\mu}):(\widetilde{K}_{\lambda}, \gamma_{\lambda})]\in \mathcal{O}[t, t^{-1}],
\]
then in $\mk(\mathcal{P}_{\nu})$ we have
\[
[(\mathbf{P}_{\mu}, \hat{\phi}_{\mu})]=\sum_{\lambda\in \Lambda}p_{\mu,\lambda}(t)[(\widetilde{K}_{\lambda}, \gamma_{\lambda})].
\]
This gives the relation between the PBW basis and the canonical basis, and the matrix $(p_{\mu,\lambda}(t))_{\mu,\lambda\in \Lambda}$ is upper triangular with diagonal entries equal to one.
\begin{remark}
    When $a=\operatorname{id},$ our results recovers Kato's result \cite{cd088214-009e-3813-9a86-97f9a299a24c}, and when $a$ is not trivial, the positivity of the coeifficients of $p_{\mu,\lambda}(t)$ is lost.
\end{remark}
\section*{Acknowledgements}
The author would like to thank Professor Jie Xiao for helpful discussions. This work was supported by the National Natural Science Foundation of China(Grant No. 12471030).
\end{spacing}

\bibliography{ref}

@incollection {OSNotes,
    AUTHOR = {Schiffmann, Olivier},
     TITLE = {Lectures on canonical and crystal bases of {H}all algebras},
 BOOKTITLE = {Geometric methods in representation theory. {II}},
    SERIES = {S\'{e}min. Congr.},
    VOLUME = {24},
     PAGES = {143--259},
 PUBLISHER = {Soc. Math. France, Paris},
      YEAR = {2012},
   MRCLASS = {17B37 (14D23 14D24 14L30)},
  MRNUMBER = {3202708},
MRREVIEWER = {Kailash C. Misra},
}

@incollection {BBD,
    AUTHOR = {Be\u{\i}linson, A. A. and Bernstein, J. and Deligne, P.},
     TITLE = {Faisceaux pervers},
 BOOKTITLE = {Analysis and topology on singular spaces, {I} ({L}uminy,
              1981)},
    SERIES = {Ast\'{e}risque},
    VOLUME = {100},
     PAGES = {5--171},
 PUBLISHER = {Soc. Math. France, Paris},
      YEAR = {1982},
   MRCLASS = {32C38},
  MRNUMBER = {751966},
MRREVIEWER = {Zoghman Mebkhout},
}

@book{lusztig2010introduction,
	author = {Lusztig, George},
	isbn = {0-8176-3712-5},
	mrclass = {17B37 (16W30 17-02 17B35 81R50)},
	mrnumber = {1227098},
	mrreviewer = {Jie\ Du},
	pages = {xii+341},
	publisher = {Birkh\"{a}user Boston, Inc., Boston, MA},
	series = {Progress in Mathematics},
	title = {Introduction to quantum groups},
	volume = {110},
	year = {1993}}

@book{Pramod-2021,
	author = {Achar, Pramod N.},
	doi = {10.1090/surv/258},
	isbn = {978-1-4704-5597-2},
	mrclass = {32-02 (14F08 17B08 17B37 20G05 32S60)},
	mrnumber = {4337423},
	pages = {xii+562},
	publisher = {American Mathematical Society, Providence, RI},
	series = {Mathematical Surveys and Monographs},
	title = {Perverse sheaves and applications to representation theory},
	url = {https://mathscinet.ams.org/mathscinet-getitem?mr=4337423},
	volume = {258},
	year = {2021}}

@article{lusztig1998canonical,
	author = {Lusztig, George},
	journal = {Representation theories and algebraic geometry},
	pages = {365--399},
	publisher = {Springer},
	title = {Canonical bases and {H}all algebras},
	year = {1998}}

@article{cd088214-009e-3813-9a86-97f9a299a24c,
 ISSN = {00029327, 10806377},
 URL = {http://www.jstor.org/stable/44508995},
 author = {Syu Kato},
 journal = {American Journal of Mathematics},
 number = {3},
 pages = {567--615},
 publisher = {The Johns Hopkins University Press},
 title = {AN ALGEBRAIC STUDY OF EXTENSION ALGEBRAS},
 urldate = {2025-02-19},
 volume = {139},
 year = {2017}
}

@article{https://doi.org/10.1112/jlms.12218,
author = {McNamara, Peter J.},
title = {Folding {KLR} algebras},
journal = {Journal of the London Mathematical Society},
volume = {100},
number = {2},
pages = {447-469},
doi = {https://doi.org/10.1112/jlms.12218},
url = {https://londmathsoc.onlinelibrary.wiley.com/doi/abs/10.1112/jlms.12218},
eprint = {https://londmathsoc.onlinelibrary.wiley.com/doi/pdf/10.1112/jlms.12218},
year = {2019}
}

@article{VaragnoloVasserot+2011+67+100,
url = {https://doi.org/10.1515/crelle.2011.068},
title = {Canonical bases and {KLR}-algebras},
title = {},
author = {M. Varagnolo and E. Vasserot},
pages = {67--100},
volume = {2011},
number = {659},
journal = {Journal für die reine und angewandte Mathematik},
doi = {doi:10.1515/crelle.2011.068},
year = {2011},
lastchecked = {2025-02-19}
}

@article{lusztig1990canonical,
  title={Canonical bases arising from quantized enveloping algebras},
  author={Lusztig, George},
  journal={Journal of the American Mathematical Society},
  volume={3},
  number={2},
  pages={447--498},
  year={1990},
  publisher={JSTOR}
}

@article{REITEN1985224,
title = {Skew group algebras in the representation theory of artin algebras},
journal = {Journal of Algebra},
volume = {92},
number = {1},
pages = {224-282},
year = {1985},
issn = {0021-8693},
doi = {https://doi.org/10.1016/0021-8693(85)90156-5},
url = {https://www.sciencedirect.com/science/article/pii/0021869385901565},
author = {Idun Reiten and Christine Riedtmann}
}

@article{KLR,
    author = {Mikhail Khovanov and Aaron D. Lauda},
    title = {A diagrammatic approach to categorification of quantum groups I},
    journal = {Represent. Theory},
    volume={13},
    pages={309-347},
    year = {2009},
    doi ={https://doi.org/10.1090/S1088-4165-09-00346-X},
}

@article{doi:10.1142/S1005386712000247,
author = {Rouquier, Rapha\"{e}l},
title = {Quiver {Hecke} Algebras and 2-{Lie} Algebras},
journal = {Algebra Colloquium},
volume = {19},
number = {02},
pages = {359-410},
year = {2012},
doi = {10.1142/S1005386712000247},

URL = { 
    
        https://doi.org/10.1142/S1005386712000247
    
    

},
eprint = { 
    
        https://doi.org/10.1142/S1005386712000247
    
    

}
}

@misc{lan2025structurecoefficientsquantumgroups,
      title={Structure coefficients for quantum groups}, 
      author={Yixin Lan and Yumeng Wu and Jie Xiao},
      year={2025},
      eprint={2510.25575},
      archivePrefix={arXiv},
      primaryClass={math.RT},
      url={https://arxiv.org/abs/2510.25575}, 
}

@article{Bi2024,   
author  = {Bi, Yingjin},   
title   = {The product of simple modules over {KLR} algebras and quiver {G}rassmannians}, 
journal = {Representation Theory},  
volume  = {28}, 
number  = {17},  
pages   = {552--592},  
year    = {2024},  
doi     = {10.1090/ert/680} 
}

@article{MA202460,
title = {{Foldings of KLR algebras}},
journal = {Journal of Algebra},
volume = {639},
pages = {60-98},
year = {2024},
issn = {0021-8693},
doi = {https://doi.org/10.1016/j.jalgebra.2023.09.035},
url = {https://www.sciencedirect.com/science/article/pii/S002186932300501X},
author = {Ying Ma and Toshiaki Shoji and Zhiping Zhou}
}

@misc{rouquier20082kacmoodyalgebras,
      title={{2-Kac-Moody algebras}}, 
      author={Raphael Rouquier},
      year={2008},
      eprint={0812.5023},
      archivePrefix={arXiv},
      primaryClass={math.RT},
      url={https://arxiv.org/abs/0812.5023}, 
      note={arXiv:0812.5023}
}

\end{document}